% SIAM Article Template
\documentclass[a4paper]{article}

% Information that is shared between the article and the supplement
% (title and author information, macros, packages, etc.) goes into
% ex_shared.tex. If there is no supplement, this file can be included
% directly.
\usepackage{lipsum}
\usepackage{amsfonts}
\usepackage{amssymb}
\usepackage{graphicx}
\usepackage{epstopdf}
\usepackage{algorithmic}

\usepackage{amsmath,amsfonts,amssymb}
\usepackage{float}

%\numberwithin{theorem}{section}
%\usepackage{authblk}

\ifpdf
  \DeclareGraphicsExtensions{.eps,.pdf,.png,.jpg}
\else
  \DeclareGraphicsExtensions{.eps}
\fi

% FundRef data to be entered by SIAM
%<funding-group>
%<award-group>
%<funding-source>
%<named-content content-type="funder-name"> 
%</named-content> 
%<named-content content-type="funder-identifier"> 
%</named-content>
%</funding-source>
%<award-id> </award-id>
%</award-group>
%</funding-group>

\newcommand{\D}{\displaystyle}
\newcommand{\dpartial}[2]{\displaystyle\frac{\partial #1}{\partial #2}}
\newcommand{\veps}{\varepsilon}
\newcommand{\dsum}{\D\sum}
\newcommand{\dint}{\D\int}

\newcommand{\mub}{\overline{\mu}}

\newcommand{\uk}{\textbf{u}}

\newcommand{\vk}{\textbf{v}}
\newcommand{\wk}{\textbf{w}}
\newcommand{\zk}{\textbf{z}}
\newcommand{\unmu}{U_N(\mu)}

\newcommand{\fk}{\textbf{f}}
\newcommand{\gk}{\textbf{g}}
\newcommand{\intO}[1]{\dint_\Omega #1\; d\Omega}
\newcommand{\intK}[1]{\dsum_{K\in \mathcal{T}_h}\dint_K #1\;d\Omega}
\newcommand{\dO}{d\Omega}
\newcommand{\cD}{\mathcal{D}}
\newcommand{\R}{\mathbb{R}}
\newcommand{\Tnmu}{T_N^\mu}

\newcommand{\TN}[1]{T_N(#1;\mu)}

\newcommand{\ra}{\rightarrow}
\newcommand{\cDb}{\cD_{train}}

\newcommand{\romu}{\rho_T}
\newcommand{\normlt}[1]{\|#1\|_{0,3,\Omega}}
\newcommand{\normlc}[1]{\|#1\|_{0,4,\Omega}}
\newcommand{\normld}[1]{\|#1\|_{0,2,\Omega}}
\newcommand{\normh}[1]{\|#1\|_{1,2,\Omega}}
\newcommand{\normT}[1]{\|#1\|_T}
\newcommand{\normX}[1]{\|#1\|_X}

\newcommand{\difu}{\uk^1-\uk^2}
\newcommand{\difuh}{\uk^1_h-\uk^2_h}

\newcommand{\difZ}{Z^1_h-Z^2_h}

\newcommand{\difUh}{U^1_h-U^2_h}

\newcommand{\cR}[1]{\mathcal{R}(#1;\mu)}
\newcommand{\DA}[1]{\mathcal{DA}(#1;\mu)}
\newcommand{\DAn}{\DA{\unmu}}
\newcommand{\Hw}[1]{H(#1;\mu)}
\newcommand{\difH}{\Hw{Z_h^1}-\Hw{Z_h^2}}
\newcommand{\en}{\epsilon_N(\mu)}
\newcommand{\taun}{\tau_N(\mu)}
\newcommand{\dn}{\Delta_N(\mu)}
\newcommand{\bmu}{\beta_N(\mu)}
\newcommand{\dual}[1]{\left<#1,V_h\right>}
\newcommand{\ximu}{\xi(\mu)}
\newcommand{\nk}{\textbf{n}}
\newcommand{\xip}{\xi^p}
\newcommand{\zetav}{\zeta^\textbf{v}}

\newtheorem{lemma}{Lemma}
\newtheorem{proposition}{Proposition}
\newtheorem{theorem}{Theorem}
%\newsiamthm{nota}{Remark}
\newtheorem{remark}{Remark}

\newenvironment{proof}{\textbf{Proof.}}{\par \begin{flushright}
$\square$
\end{flushright}}
\begin{document}

\title{On a certified Smagorinsky reduced basis turbulence model}

\author{
Tom\'{a}s Chac\'{o}n Rebollo\thanks{IMUS \& Departamento de Ecuaciones Diferenciales y An\'{a}lisis Num\'{e}rico 
, Apdo. de correos 1160, Universidad de Sevilla, 41080 Seville, Spain. chacon@us.es, edelgado1@us.es}  
\and Enrique Delgado \'{A}vila\footnotemark[1]
\and Macarena G\'{o}mez M\'{a}rmol\thanks{Departamento de Ecuaciones Diferenciales y An\'{a}lisis Num\'{e}rico, Apdo. de correos 1160, Universidad de Sevilla, 41080 Seville, Spain. macarena@us.es}
\and Francesco Ballarin\thanks{mathLab, Mathematics Area, SISSA, International School for Advanced Studies, via Bonomea 265, I-34136 Trieste, Italy. francesco.ballarin@sissa.it, gianluigi.rozza@sissa.it}
\and Gianluigi Rozza\footnotemark[3]
}

\maketitle

% REQUIRED
\begin{abstract}
In this work we present a reduced basis Smagorinsky turbulence model for steady flows. We approximate the non-linear eddy diffusion term using the Empirical Interpolation Method (\emph{cf.} \cite{EIM1, EIM2}), and the velocity-pressure unknowns by an independent reduced-basis procedure. 
This model is based upon an \emph{a posteriori} error estimation for Smagorinsky turbulence model. The theoretical development of the \emph{a posteriori} error estimation is based on \cite{Deparis} and \cite{Manzoni}, according to the Brezzi-Rappaz-Raviart stability theory, and adapted for the non-linear eddy diffusion term. 
We present some numerical tests, programmed in FreeFem++ (\textit{cf.} \cite{freefem++}), in which we show an speedup on the computation by factor larger than 1000  in benchmark 2D flows.
\end{abstract}

% REQUIRED
\textbf{Keywords. }
 Reduced basis method, Empirical interpolation method, \textit{a posteriori} error estimation, steady Smagorinsky model.

% REQUIRED
%\begin{AMS}
%65N12, 65N15, 65N30, 76F65.
%\end{AMS}

\section{Introduction}
Reduced Order Modeling (ROM) has been successfully used in several fields to provide large reduction in computation cost for the solution of Partial Differential Equations \cite{hestaven, holmes, Patera2002, LibroBR, Stokes1, Rozza2008}.  In fluid mechanics a popular strategy is to use POD to extract the  dominant structures for high-Reynolds flow,  which are then used in a Galerkin approximation of the underlying equations \cite{holmes, sirovich}. There have been a number of recent works combining POD/RB with Variational Multi-Scale models \cite{VMS-Iliescu}, ensemble models \cite{Ensemble1}, flow regularization models \cite{Regularization1, Ballarin} as well as bifurcation problems \cite{Bifurcation4,Bifurcation2,Bifurcation1,Bifurcation3} all in the framework of the incompressible Navier-Stokes equations. Application of the POD-Galerkin strategy to turbulent fluid flows remains a challenging area of research. By construction, ROMs generated using only the first most energetic POD basis functions are not endowed with the dissipative mechanisms associated to the creation of lower size, and less energetic, turbulent scales. Increasing the number of modes creates very large POD-Galerkin ROMs that are still very computationally expensive to solve (\textit{cf.} \cite{hadowell} and references therein). 
\par
A developing way of research to overcome this difficulty is to adapt the standard turbulent closure techniques based upon eddy dissipation to model the effect of the un-resolved ROM modes on the resolved ones. This is based upon the analysis of \cite{couplet}, that shows that the transfer of energy among the POD modes is similar to the transfer of energy among Fourier modes, in fact there is a net energy transfer from low index POD modes to higher index POD modes (\textit{cf.} \cite{iliescu} and references therein).

In this paper we address an alternative strategy, that consists of constructing ROMs of turbulence models, rather than using ROM to construct turbulence models. We assume that the flow under consideration (or, rather, its large scales) is well modeled by the starting turbulence model, at least up to the accuracy required by the targeted application. Our purpose is to construct fast solvers for the turbulence model which is a highly non-linear mathematical system of equations (often with larger and more complex non-linearities than the Navier-Stokes equations) and needs large computational times to be solved. Current engineering applications for design, optimization and control require repeated queries to turbulent models, so there is a heavy interest in following this approach.

We address the systematic construction of a Reduced Basis (RB) Smagorinsky turbulence model (\textit{cf.} \cite{Smago}), which is the basic Large Eddy Simulation (LES) turbulence model, in which the effect of the subgrid scales on the resolved scales is modeled by eddy diffusion terms (\textit{cf.} \cite{TomasSmago, Sagaut}). It is an intrinsically discrete model, since the eddy viscosity term depends on the mesh size. 

RB methods for incompressible fluid flows were first introduced for the Stokes equations (see e.g. \cite{Stokes2, Stokes3, Stokes4}). The reduced basis is constructed by means of greedy algorithms, an \textit{a posteriori} error bound estimator based upon the dual norm of the residual and the inf-sup constant developed for the Stokes problem. This idea was then extended for the Navier-Stokes problem (see e.g. \cite{Deparis,Dparis-Rozza,Manzoni,Patera}), developing the \textit{a posteriori} error estimator taking into account the Brezzi-Rappaz-Raviart (BRR) theory (\textit{c.f.} \cite{BRR}). We extend in this paper the approach for the reduction of Navier-Stokes equations to the Smagorinsky turbulence model.
     
Unlike the Navier-Stokes problem that has an affine formulation with respect to the physical parameters, the Smagorinsky's eddy viscosity term leads to a non-affine formulation with respect to the parameter for the Smagorinsky model. We use the Empirical Interpolation Method (EIM), introduced in \cite{EIM1} and \cite{EIM2}, to approximate the non-linear eddy viscosity term of the Smagorinsky model, obtaining an affine formulation with respect to the parameter. Thanks to this technique, we can store in the offline phase parameter-independent matrices, obtaining in the online phase a fast and highly accurate computation of the eddy diffusion term.

The construction of the reduced spaces for velocity and pressure is made by means of a greedy algorithm. To reduce the time to compute the errors within this algorithm, we have developed an \textit{a posteriori} error bound estimator based on the Brezzi-Rappaz-Raviart theory (see \cite{BRR}). The use of a norm that takes into account the eddy viscosity effects allows a better initialization of the Greedy algorithm.

We have performed several tests of the reduced model to solve 2D step and cavity flows, with Reynolds number ranging in intervals in which a steady solution is known to exist. We obtain speed-up rates of several order of magnitude, where errors are normalized with respect to the finite element solution below $10^{-4}$.

The structure of this paper is as follows. In section \ref{sec::FE}, we present the continuous and discrete problems we work with. The reduced basis method is considered in \ref{sec::RBM}, where we describe the Greedy algorithm that we use to select the different snapshots. After that, in section \ref{sec::numerical}, we present the numerical analysis that we need in order to assure the well-possedness, based on the BRR theory, of the discrete problem presented in section \ref{sec::FE}. The construction and analysis of the \textit{a posteriori} error bound estimator is presented in section \ref{sec::post}. Then, in section \ref{sec::EIM}, we explain more in detail how we treat the non-linear eddy viscosity corresponding with the Smagorinsky term. Finally, we present some numerical results in section \ref{sec::results}, where we show the reduction of the computational time in two different tests.

\section{Finite Element Smagorinsky model}\label{sec::FE}
To formulate the Smagorinsky turbulence model, let $\Omega$ be a bounded polyhedral domain in $\R^d, (d=2,3)$. We assume that its boundary is split into $\Gamma=\Gamma_D\cup\Gamma_N$, where $\Gamma_D=\Gamma_{D_g}\cup\Gamma_{D_0}$ is the boundary relative to the non-homogeneous and homogeneous Dirichlet boundary conditions, and $\Gamma_N$ to the Neumann conditions. 

Let $\{\mathcal{T}_h\}_{h>0}$ a family of affine-equivalent and conforming triangulations of $\overline{\Omega}$, formed by triangles or quadrilaterals ($d=2$), tetrahedra or hexaedra ($d=3$). As usual the parameter $h$ is the maximum diameter $h_K$ among the elements $K\in\mathcal{T}_h$. %Given an integer $l\ge0$, and an element $K\in\mathcal{T}_h$, we denote by $\R_l(K)$ either $\mathbb{P}_l(K)$, the space of Lagrange polynomials of degree $\le l$, defined on $K$, if the grids are formed by triangles $(d=2)$ or tetrahedra $(d=3)$; or $\mathbb{Q}_l$, the space of Lagrange polynomials of degree $\le l$ on each variable, defined on $K$, if the family of triangulations is formed by quadrilaterals ($d=2$) or hexaedra ($d=3$).

Although the Smagorinsky model is intrinsically discrete, it can be interpreted as a discretization of a continuous model. We next present this model to clarify its relationship with the Navier-Stokes equations. In this way, the ``continuous'' Smagorinsky turbulence model is formulated as

\begin{equation}\label{NS}\left\{\begin{array}{ll}
\wk\cdot\nabla\wk+\nabla p-\nabla\cdot\left(\left(\dfrac{1}{\mu}+\nu_T(\wk)\right)\nabla\wk\right)=\fk&\mbox{ in }\Omega\vspace{0.1cm}\\

\nabla\cdot\wk=0&\mbox{ in } \Omega\vspace{0.1cm}\\
\wk=\gk_D&\mbox{ on }\Gamma_{D_g}\vspace{0.1cm}\\
\wk=0&\mbox{ on }\Gamma_{D_0}\vspace{0.1cm}\\

-p\nk+\left(\dfrac{1}{\mu}+\nu_T(\wk)\right)\dpartial{\wk}{\textbf{n}}=0&\mbox{ on }\Gamma_N
\end{array}\right.
\end{equation}
 where $\mu$ is the Reynolds number, $\wk=\wk(\mu)$ is the velocity field and $p=p(\mu)$ is the pressure, both depending on the Reynolds number. 

The eddy diffusion term is given by $\nu_T(\wk)=C_S^2\D\sum_{K\in\mathcal{T}_h}h_K^2\big|\nabla\wk_{|_K}\big|\chi_K$, where $\big|\cdot\big|$ denotes the Frobenius norm in $\R^{d\times d}$, and $C_S$ is the Smagorinsky constant \cite{phdSamu}.

Let us consider the spaces $Y=\{\vk\in H^1(\Omega):\vk|_{\Gamma_D}=0\}$ for velocity and $M=L^2(\Omega)$ for pressure. We assume that there exists a lift function $\uk_D\in(H^1(\Omega))^d$, such that $\uk_D|_{\Gamma_{D_g}}=\gk_D$, $\uk_D|_{\Gamma_{D_0}}=0$, and $\nabla\cdot\uk_D=0$ in $\Omega$.
With those conditions, we assure that the lifted velocity $\uk=\wk-\uk_D$ is still incompressible and satisfies the homogeneous Dirichlet boundary conditions on $\Gamma_D$.  We will assume that $\fk\in (L^2(\Omega))^d$, and $\gk_D\in(H^{1/2}(\Omega))^d$.

Let $Y_h\subset Y$ and $M_h\subset M$ be two finite subspaces of $Y$ and $M$. We  consider the following variational discretization of problem (\ref{NS}), actually, the ``true'' Smagorinsky model:

\begin{equation}\label{FV}\left\{\begin{array}{l}
\mbox{Find } (\uk_h,p_h)=(\uk_h(\mu),p_h(\mu))\in Y_h\times M_h\mbox{ such that } \forall\vk_h\in Y_h, \forall q_h\in M_h \vspace{0.2cm}\\

\begin{array}{ll}
a(\uk_h,\vk_h;\mu)+b(\vk_h,p_h;\mu)+ a_S(\wk_h;\wk_h,\vk_h;\mu) \\
+c(\uk_h,\uk_h,\vk_h;\mu)+c(\uk_D,\uk_h,\vk_h;\mu)+c(\uk_h,\uk_D,\vk_h;\mu)=F(\vk_h;\mu)\vspace{0.1cm}\\
b(\uk_h,q_h;\mu)=0\end{array}\end{array}\right.
\end{equation}
where $\wk_h=\uk_h+\uk_D$; the bilinear forms $a(\cdot,\cdot;\mu)$ and $b(\cdot,\cdot;\mu)$ are defined by
\[\]\[
a(\uk,\vk;\mu)=\frac{1}{\mu}\int_\Omega\nabla\uk:\nabla\vk\,d\Omega,\qquad b(\vk,q;\mu)=-\int_\Omega(\nabla\cdot\vk)q\,d\Omega;
\]
the trilinear form, $c(\cdot,\cdot,\cdot;\mu)$, and the non-linear Smagorinsky term, $a_S(\cdot;\cdot,\cdot;\mu)$, are given by
\[
c(\zk,\uk,\vk;\mu)=\int_\Omega(\zk\cdot\nabla\uk)\vk\,d\Omega,\qquad a_S(\zk;\uk,\vk;\mu)=\int_\Omega\nu_T(\zk)\nabla\uk:\nabla\vk\,d\Omega.
\]

Finally, the linear form $F(\cdot;\mu)$ is defined by
\[
F(\vk;\mu)=\left<\fk,\vk\right>-a(\uk_D,\vk;\mu)-c(\uk_D,\uk_D,\vk;\mu),
\]
where $\left<\cdot,\cdot\right>$ stands for the duality pairing between $Y'$ and $Y$, $Y'$ being the dual space of $Y$.

The solution of problem $(\ref{FV})$ is intended to approximate the large-scales component of the solution of the Navier-Stokes problem (i.e., problem $(\ref{NS})$ with $\nu_T=0)$.

Let us define the norms relative to the spaces $Y$ and $M$. For the velocity space $Y$, we consider a weighted inner product, $(\cdot,\cdot)_T$, defined as%\footnote{In practise we choose $\mub=\max\cD$, and $\nu_T^*(\wk(\mu))=\nu_T(\wk(\mub))$ in order to avoid the $\mu$-dependence of the T-norm}
\begin{equation}\label{normT}
(\uk,\vk)_T=	\intO{\left[\frac{1}{\mub}+\nu_T^*\right]\nabla\uk:\nabla\vk}\qquad\forall\uk,\vk\in Y,
\end{equation} 
where $\nu_T^*=\nu_T(\wk(\mub))$, $\mub=\arg\D\min_{\mu\in\cD}\left\{\D\sum_{K\in\mathcal{T}_h}(C_Sh_K)^2\D\min_{x\in K}|\nabla\wk(\mu)|(x)\chi_K(x)\right\}$, and $\wk(\mu)$ is the velocity solution of (\ref{NS}).
% corresponding to a selected parameter value $\mub\in\cD$. 
This inner product induces a norm linked to the eddy diffusion term, $\|\cdot\|_T=(\cdot,\cdot)_T^{1/2}$. As the functions of $Y$ vanish on $\Gamma_D$, then, this norm is equivalent to the usual $H^1$ norm. This norm will turn out to be crucial to apply our error estimator in the RB construction by the Greedy algorithm. For the pressure space $M$, we will use the usual $L^2$-norm, denoted by $\normld{\cdot}$.
  
For the sake of simplicity of notation, let us denote by $X$ the product space $X=Y\times M$ and, by extension, $X_h=Y_h\times M_h\subset X$. We also define the $X$-norm as
\begin{equation}\label{Xnorm}
\|U\|_X=\sqrt{\|\uk\|_T^2+\normld{p_u}^2}\qquad \forall U=(\uk,p_u)\in X,
\end{equation}
With this notation, we can rewrite the variational problem (\ref{FV}) as:

\begin{equation}\label{pb}\left\{\begin{array}{l}
\mbox{Find }U_h(\mu)\in X_h \mbox{ such that}\vspace{0.2cm}\\
A(U_h(\mu),V_h;\mu)=F(V_h;\mu)\qquad\forall V_h\in X_h.\end{array}\right.
\end{equation}

In this formulation, the operator $A$ is given by,
\begin{equation}\label{opA}
A(U_h,V_h;\mu)=\frac{1}{\mu}A_0(U_h,V_h)+A_1(U_h,V_h)+A_2(U_h;V_h)+A_3(U_h;V_h),
\end{equation}
where we denote $V_h=(\vk_h,p_v)$, and
\[
\begin{array}{rll}
A_0(U,V)&\!\!\!\!=\intO{\nabla\uk:\nabla\vk}, \quad A_2(U;V)=\intO{(\uk\cdot\nabla\uk)\vk},\\                                                
A_1(U,V)&\!\!\!\!=\intO{[(\nabla\cdot\uk)p_v-(\nabla\cdot\vk)p_u]}%\vspace{0.1cm} \\
+\intO{(\uk_D\cdot\nabla\uk)\vk}
+\intO{(\uk\cdot\nabla\uk_D)\vk}, \\
A_3(U;V)&\!\!\!\!=\intO{\nu_T(\uk+\uk_D)\,\nabla(\uk+\uk_D):\nabla\vk}.
\end{array}
\]

\section{Reduced Basis Problem}\label{sec::RBM}
In this section, we present the reduced basis method for the Smagorinsky turbulence model. We will focus on the Greedy algorithm. This is an adaptation of the RB method for Navies-Stokes equations (\textit{c.f.} \cite{Deparis, Manzoni}). We assume that the Reynolds number ranges on a compact interval $\cD\subset\R$.  For the startup of the Greedy algorithm, we choose an arbitrary parameter value $\mu^1\in\cDb$, and we compute the corresponding first snapshot $(\uk_h(\mu^1),p_h(\mu^1))$, solution of the finite element problem (\ref{pb}). Here $\cDb\subset\cD$ is a discrete set where we select the possible values of $\mu$. We will denote by $N_{\max}$ the maximum number of basis functions. 

In order to guarantee the inf-sup stability of the RB approximation \cite{Ballarin,Stokes4,Stokes3}, let us consider the so-called inner pressure \emph{supremizer} operator $T_p^\mu:M_h\rightarrow Y_h$, as
\begin{equation}\label{sup}
\left(T_p^\mu q_h,\vk_h\right)_T=b(q_h,\vk_h;\mu)\quad\forall\vk_h\in Y_h.
\end{equation}

In this way, we define our first reduced velocity and pressure spaces as
\begin{equation}
M_1=\mbox{span}\{\xi_1^p:=p_h(\mu^1)\},\quad
Y_1=\mbox{span}\{\zeta_k^{\vk}:=\uk_h(\mu^1), T_p^\mu\xi_1^p\}.
\end{equation}

To add a new element to the reduced space, choose the (N+1)-th value of $\mu\in\cDb$ as
\begin{equation}\label{Greedyerr}
\mu^{N+1}=\arg\max_{\mu\in\cDb}\normX{U_h(\mu)-U_N(\mu)}, \quad 1\le N\le N_{\max};
\end{equation}
where $U_N(\mu)$ is the solution of the discrete model on the current reduced-basis space $X_N$:

\begin{equation}\label{rb}
\left\{\begin{array}{l}
\mbox{Given }\mu\in\cD,\mbox{ find }U_N(\mu)\in X_N \mbox{ such that}\vspace{0.2cm}\\
A(U_N(\mu),V_N;\mu)=F(V_N;\mu)\qquad\forall V_N\in X_N.\end{array}\right.
\end{equation}

Since the computation of $\normX{U_h(\mu)-U_N(\mu)}$ may be very expensive due to the computation of the finite element solution $U_h(\mu)$ for all $\mu\in \cDb$, we consider an inexpensive \textit{a posteriori} error estimator $\Delta_N$, constructed in Section \ref{sec::post}, and we define
\begin{equation}
\mu^{N+1}=\arg\max_{\mu\in\cDb}\Delta_N(\mu), \quad 1\le N\le N_{\max}.
\end{equation}

Once we compute the optimum $\mu^{N+1}$, we add to the reduced space the new snapshots $(\uk_h(\mu^{N+1}),p_h(\mu^{N+1}))$, solution of the finite problem (\ref{pb}). We also have to add to the velocity space the \textit{supremizer} corresponding to the pressure snapshot. Thus, the new reduced space, is $X_{N+1}=Y_{N+1}\times M_{N+1}$, where
\begin{equation}\label{MN}
M_{N+1}=\mbox{span}\{\xip_k:=p_h(\mu^k)\}_{k=1}^{N+1}, \quad
Y_{N+1}=\mbox{span}\{\zetav_k:=\uk_h(\mu^k),~ T_p^\mu\xip_k\}_{k=1}^{N+1}.
\end{equation}
   
Note that the construction of these spaces is hierarchical, i.e., $X_1\subset X_2\subset\dots\subset X_{N_{\max}}\subset X_h$. Finally, in order to avoid ill-conditioned matrices in the solution of $(\ref{rb})$, we orthonormalize the reduced velocity space $Y_N$ with respect the norm $\normT{\cdot}$, and the reduced pressure space $M_N$ with respect the $L^2$-norm.

We summarize the Greedy algorithm:
\begin{enumerate}
\item Set $\mu^1$, and compute $\uk_h(\mu^1), p_h(\mu^1)$ and $T_p^\mu p_h(\mu^1)$, and the reduced spaces $Y_1, M_1$. 
\item For $k\ge2$, compute $\Delta_{k-1}(\mu), \forall\mu\in\cD_{train}$ and set $\mu^{k}=\arg\D\max_{\mu\in\cD_{train}}\Delta_{k-1}(\mu).$
\item Compute $\uk_h(\mu^{k}), p_h(\mu^k)$ and $T_p^\mu p_h(\mu^k)$, and then the reduced spaces $Y_k, M_k$.
\item Stop if $\D\max_{\mu\in\cD_{train}}\Delta_k(\mu)<\varepsilon_{RB}$.
\end{enumerate}

\section{Well-posedness analysis}\label{sec::numerical}
\hspace{14pt} The well-posedness of the Smagorinsky problem is provided in \cite{TomasSmago} by the classical Brezzi theory (\textit{c.f.} \cite{Brezzi}). The boundedness of the FE solution is provided by this analysis. However, in this section, we analyse the well-posedness of the Smagorinsky FE solution using the more general Brezzi-Rappaz-Raviart (BRR) theory (see e.g. \cite{BRR}). The finality of using the BRR theory instead the Brezzi theory is the construction of the error estimator provided by the BRR for the reduced basis problem. Let us denote the directional derivative,  at $U_h\in X_h$, in the direction $Z_h=(\zk,p_z)\in X_h$, as
$\partial_1A(U_h,\cdot;\mu)(Z_h)$.
%\begin{equation}\label{dder}
%\partial_1A(U_h,\cdot;\mu)(Z_h)=\lim_{\lambda\ra 0}\frac{A(U_h+\lambda Z_h,\cdot;\mu)-A(U_h,\cdot;\mu)}{\lambda}\in X'_h.
%\end{equation}
If we derive each operator term in (\ref{opA}), we obtain
\[
\begin{array}{ll}
\partial_1A_0(U,V)(Z)&=A_0(Z,V),\quad
\partial_1A_1(U,V)(Z)=A_1(Z,V),\vspace{0.1cm}\\
\partial_1A_2(U;V)(Z)&=\intO{(\uk\cdot\nabla\zk)\vk}+\intO{(\zk\cdot\nabla\uk)\vk},\vspace{0.1cm}\\
\partial_1A_3(U;V)(Z)&=\intO{\nu_T(\uk+\uk_D)\,\nabla\zk:\nabla\vk} \\
				\,	&+\dsum_{K\in\mathcal{T}}\dint_K{(C_Sh_K)^2\dfrac{\nabla(\uk+\uk_D):\nabla\zk}{|\nabla(\uk+\uk_D)|}\big(\nabla(\uk+\uk_D):\nabla\vk\big)\;\dO},\end{array}
\]

%The following local inverse inequalities for polynomial functions will be used in the sequel. Its proof consist in the application of norm equivalence on finite dimensional spaces (\textit{c.f.}\cite{desinv}).
%\begin{lema}\label{lem::locinv}
%Let $q_1, q_2$ be two real numbers such that $1\le q_1,q_2\le+\infty$. Let $k_1,k_2$ be two non-negative integer numbers. Assume that $k_2\le k_1$ and $k_2-d/q_2\le k_1-d/q_1$. In addition, suppose that the family of triangulations $\{\mathcal{T}_h\}_{h>0}$ is regular. Then, for each non-negative integer $l$ there exists a constant $C>0$ such that:
%\begin{equation}
%\forall K\in\Th,\quad \forall p\in\R_l(K), \quad |p|_{k1,q1,K}\le Ch_{K}^{k_2-k_1-\frac{d}{q_2}+\frac{d}{q_1}}|p|_{k2,q_2,K},
%\end{equation}
%for all $\Th$, where the constant $C$ only depends on $q_1,q_2,k_1,k_2,d,l,$ and the aspect ratio of the triangulations.
%\end{lema} 

For the well posedness of the problem, we have to guarantee the uniform coerciveness and the boundedness of $\partial_1A$ in the sense that for any solution $U_h(\mu)$ of (\ref{pb}), there exist $\beta_0>0$ and $\gamma_0\in\mathbb{R}$ such that $\forall\mu\in\mathcal{D}$,
\begin{equation}\label{conditions}
\begin{array}{l}
0<\beta_0<\beta_h(\mu)\equiv\D\inf_{Z_h\in X_h}\D\sup_{V_h\in X_h}\dfrac{\partial_1A(U_h(\mu),V_h;\mu)(Z_h)}{\|Z_h\|_X\|V_h\|_X},\vspace{0.2cm}\\
\infty>\gamma_0>\gamma_h(\mu)\equiv\D\sup_{Z_h\in X_h}\D\sup_{V_h\in X_h}\dfrac{\partial_1A(U_h(\mu),V_h;\mu)(Z_h)}{\|Z_h\|_X\|V_h\|_X}.\end{array}
\end{equation}

Then, according to the BRR theory (\textit{cf.} \cite{BRR,Rappaz}), it will follow that in a neighbourhood of $U_h(\mu)$ the solution of (\ref{pb}) is unique and bounded in $\normX{\cdot}$ in terms of the data. We will prove this in Section \ref{sec::post}, and as consequence we shall construct the \textit{a posteriori} error bound estimator. 

Since $H^1(\Omega)$ is embedded in $L^4(\Omega)$, let us denote by $C_T$ the Sobolev embedding constant such that 
$\normlc{\vk}\le C_T\normT{\vk}$, for all $\vk\in Y$. Also, let us denote by $C_{\mub}$ the constant such that $\normT{\vk}\le C_{\mub}\normld{\nabla \vk}$, for all $\vk \in Y$. These constant will be used in the following propositions. By standard arguments, it follows

\begin{proposition}\label{prop::cont}
There exists $\gamma_0\in\R$ such that $\forall \mu\in\cD$
\[
|\partial_1A(U_h(\mu),V_h;\mu)(Z_h)|\le \gamma_0\|Z_h\|_X\|V_h\|_X\quad \forall Z_h,V_h\in X_h.
\]
\end{proposition}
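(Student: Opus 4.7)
The plan is to use the decomposition $\partial_1 A = \tfrac{1}{\mu}\partial_1A_0+\partial_1A_1+\partial_1A_2+\partial_1A_3$ supplied above, and to bound each of the four summands separately as a product of $\|Z_h\|_X$ and $\|V_h\|_X$ times a constant that is uniform over the compact parameter set $\cD$. The factor $1/\mu$ contributes only the harmless multiplier $1/\min_{\mu\in\cD}\mu$, and the main qualitative ingredient I will need throughout is the a priori bound $\normT{\uk_h(\mu)}\le C$ for every $\mu\in\cD$, which follows from the Brezzi analysis referenced at the beginning of the section.

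For the linear-in-$U$ components $\partial_1 A_0$ and $\partial_1 A_1$, the derivatives coincide with the bilinear forms $A_0(Z,V)$ and $A_1(Z,V)$. The pure Stokes part is handled by Cauchy-Schwarz together with the $H^1$-equivalence of $\normT{\cdot}$: concretely, $|A_0(Z,V)|\le \normld{\nabla\zk}\normld{\nabla\vk}\le \mub\,\normT{\zk}\normT{\vk}$. The divergence-pressure pairings in $A_1$ are dispatched by a single Cauchy-Schwarz, and the $\uk_D$-convective contributions by Hölder with exponents $(4,2,4)$ combined with the Sobolev constant $C_T$ linking $\normlc{\cdot}$ to $\normT{\cdot}$.

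For $\partial_1 A_2(U_h(\mu);V_h)(Z_h)$, the two trilinear terms $\int(\uk_h\cdot\nabla\zk_h)\vk_h$ and $\int(\zk_h\cdot\nabla\uk_h)\vk_h$ are treated by the same Hölder argument, producing a bound proportional to $C_T^2\,\normT{\uk_h(\mu)}\,\normT{\zk_h}\,\normT{\vk_h}$; the uniform a priori bound on $\uk_h(\mu)$ then yields a $\mu$-independent constant.

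The delicate component is $\partial_1 A_3$, and this is where the main work lies. Cauchy-Schwarz for the Frobenius inner product shows that both summands in the formula for $\partial_1 A_3(U;V)(Z)$ are majorized by $\int\nu_T(\uk+\uk_D)|\nabla\zk|\,|\nabla\vk|\,d\Omega$; evaluating at $U=U_h(\mu)$ gives
\[
|\partial_1A_3(U_h(\mu);V_h)(Z_h)|\le 2\int_\Omega\nu_T(\uk_h+\uk_D)\,|\nabla\zk_h||\nabla\vk_h|\,d\Omega.
\]
A further Cauchy-Schwarz in $L^2$ weighted by $\nu_T^{1/2}$ reduces the task to bounding $\int\nu_T(\uk_h+\uk_D)|\nabla\zk_h|^2$; replacing $\nu_T$ by its local definition $(C_S h_K)^2|\nabla(\uk_h+\uk_D)|$ and applying Hölder with exponents $(3,3,3)$ bounds this by $(C_S h_{\max})^2\normlt{\nabla(\uk_h+\uk_D)}\,\normlt{\nabla\zk_h}^2$, which, through $L^3$-gradient control on $Y_h$ together with the a priori bound on $\uk_h(\mu)$, gives a $\mu$-uniform majorant of the form $(\text{const})\normT{\zk_h}\normT{\vk_h}$. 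Collecting the four pieces and taking the supremum over $\mu\in\cD$ yields the desired $\gamma_0$. The genuine obstacle is indeed this Smagorinsky term: because $\nu_T$ depends on $\nabla(\uk_h(\mu)+\uk_D)$, both the $\mu$-uniform $\normT{\cdot}$-bound on $\uk_h(\mu)$ and the $L^3$-gradient estimates on discrete velocities are essential to close the argument uniformly in $\mu$.
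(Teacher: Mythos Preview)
Your argument is correct. The paper does not actually prove this proposition --- it simply asserts that it follows ``by standard arguments'' --- so there is no proof to compare against directly. Your decomposition into the four pieces $\partial_1 A_i$, the H\"older $(4,2,4)$ handling of the convective terms through the Sobolev constant $C_T$, the uniform a~priori bound on $\uk_h(\mu)$ coming from the Brezzi analysis, and the use of local inverse inequalities to pass from $L^2$ to $L^3$ on discrete gradients in the Smagorinsky contribution $\partial_1 A_3$ are precisely the tools the authors themselves deploy a few lines later in the proof of Lemma~\ref{LemmaRho}; your sketch is therefore entirely in the spirit of the paper and would be accepted as the omitted ``standard argument.'' One minor caveat worth flagging: the lift $\uk_D$ need not lie in the discrete space $Y_h$, so the $L^3$ bound on $\nabla\uk_D$ is not covered by the inverse inequality and must come either from choosing a discrete lift or from additional regularity of the boundary datum $\gk_D$; either route is harmless here since $\uk_D$ is fixed and $\mu$-independent.
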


Proving that the $\beta_h(\mu)$ inf-sup condition in (\ref{conditions}) is satisfied, we assure that we are in a smooth branch of solutions for the Smagorinsky problem.

\begin{proposition}\label{prop::infsup}
Let $C^\star=C_T^2(C_{\mub}+1)$. Suppose that $\normld{\nabla\uk_D}<\dfrac{1}{C^\star}$, and $\normld{\nabla\uk_h}\le \dfrac{1}{C^\star}-\normld{\nabla\uk_D}$. Then, there exists $\tilde{\beta_h}>0$ such that,
\begin{equation}
\partial_1 A(U_h,V_h;\mu)(V_h)\ge\tilde{\beta_h}\normT{\vk_h}^2\qquad\forall V_h\in X_h.
\end{equation}
\end{proposition}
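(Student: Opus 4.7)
The strategy is to set $Z_h=V_h$ in the expansion of $\partial_1 A(U_h,V_h;\mu)(Z_h)$ and split the resulting scalar into a dissipative piece (coming from $\frac{1}{\mu}A_0$ and $\partial_1 A_3$, to be bounded below by $\normT{\vk_h}^2$) and a convective piece (coming from $A_1$ and $\partial_1 A_2$, to be absorbed by the smallness hypothesis on $\normld{\nabla\uk_h}+\normld{\nabla\uk_D}$).

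First I would expand $\partial_1 A(U_h,V_h;\mu)(V_h)$ term by term. The pressure contributions in $A_1(V_h,V_h)$ cancel by antisymmetry in $p_v$, and what remains of $A_1$ combines with $\partial_1 A_2(U_h;V_h)(V_h)$ into
\[
\intO{[(\wk_h\cdot\nabla\vk_h)\cdot\vk_h+(\vk_h\cdot\nabla\wk_h)\cdot\vk_h]},
\]
where $\wk_h=\uk_h+\uk_D$. Next, for the dissipative piece I would observe that the rank-one contribution appearing in $\partial_1 A_3$ is the non-negative quantity $\dsum_{K\in\Th}\dint_K(C_Sh_K)^2|\nabla\wk_h:\nabla\vk_h|^2/|\nabla\wk_h|\,\dO\ge 0$; dropping it gives
\[
\frac{1}{\mu}A_0(V_h,V_h)+\partial_1 A_3(U_h;V_h)(V_h)\ge\intO{\left[\frac{1}{\mu}+\nu_T(\wk_h)\right]|\nabla\vk_h|^2}.
\]
The choice of $\mub$ as minimizer of $\sum_K(C_Sh_K)^2\min_{x\in K}|\nabla\wk(\mu)|(x)\chi_K$ is then invoked to guarantee that the integrand on the right dominates the integrand $[1/\mub+\nu_T^*]|\nabla\vk_h|^2$ defining $\normT{\vk_h}^2$. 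Third, I would bound the combined convective term by H\"older's inequality together with the Sobolev embedding $\normlc{\cdot}\le C_T\normT{\cdot}$ and the norm equivalence $\normT{\cdot}\le C_{\mub}\normld{\nabla\cdot}$, arriving at a bound of the form $C^\star(\normld{\nabla\uk_h}+\normld{\nabla\uk_D})\normT{\vk_h}^2$ with $C^\star=C_T^2(C_{\mub}+1)$. Combining with the hypothesis yields
\[
\partial_1 A(U_h,V_h;\mu)(V_h)\ge\bigl(1-C^\star(\normld{\nabla\uk_h}+\normld{\nabla\uk_D})\bigr)\normT{\vk_h}^2,
\]
which is strictly positive since $\normld{\nabla\uk_D}<1/C^\star$ strictly by assumption, providing the coercivity constant $\tilde\beta_h$.

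The main obstacle I expect is the dissipative lower bound, specifically the interplay between $1/\mu$ and $1/\mub$ when $\mu\ne\mub$: one does not have $1/\mu\ge 1/\mub$ pointwise in $\mu\in\cD$, so the pointwise domination $[1/\mu+\nu_T(\wk_h)]\ge[1/\mub+\nu_T^*]$ has to be ensured by using the defining minimizing property of $\mub$ on the eddy-viscosity component in the range of $\mu$ where the viscous contribution $1/\mu$ is deficient. The careful allocation of $\mu$-dependent mass between the viscous and eddy-viscosity terms, together with the exact bookkeeping of the Sobolev and norm-equivalence constants so that $C^\star$ emerges as $C_T^2(C_{\mub}+1)$ rather than a larger constant, will be the delicate points of the argument.
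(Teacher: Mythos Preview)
Your plan matches the paper's proof essentially step for step: set $Z_h=V_h$, observe that the pressure terms in $A_1$ cancel and the remaining convective terms combine to $\intO{[(\wk_h\cdot\nabla\vk_h)\vk_h+(\vk_h\cdot\nabla\wk_h)\vk_h]}$, drop the non-negative rank-one contribution in $\partial_1A_3$, bound the convective piece via H\"older, the Sobolev embedding $\normlc{\cdot}\le C_T\normT{\cdot}$, and the equivalence $\normT{\cdot}\le C_{\mub}\normld{\nabla\cdot}$ to obtain the factor $C^\star=C_T^2(C_{\mub}+1)$, and conclude with the smallness hypothesis.

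One remark: the issue you flag in your last paragraph---the pointwise comparison of $\tfrac{1}{\mu}+\nu_T(\wk_h)$ with $\tfrac{1}{\mub}+\nu_T^*$---is real, and the paper simply writes $\intO{(\tfrac{1}{\mu}+\nu_T(\wk_h))|\nabla\vk_h|^2}\ge\normT{\vk_h}^2$ without further comment. So you are not missing a clever trick here; the paper treats this step as immediate. Your instinct to tie it to the minimizing definition of $\mub$ is the intended reading, but do not expect a fully rigorous allocation argument in the published proof.
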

\begin{proof} We consider $Z_h=V_h$ in $\partial_1 A(U_h,V_h;\mu)(Z_h)$, having
\begin{equation}\begin{array}{ll}
\partial_1A(U_h,V_h;\mu)(V_h)&=\dfrac{1}{\mu}\partial_1A_0(U_h,V_h)(V_h)+\partial_1A_1(U_h,V_h)(V_h)\vspace{0.1cm}\\
&+\;\partial_1A_2(U_h,V_h)(V_h)+\partial_1A_3(U_h;V_h)(V_h).
\end{array}
\end{equation}

As,
\begin{equation}\label{infsup1}
\begin{array}{ll}
\dfrac{1}{\mu}\partial_1A_0(U_h,V_h)(V_h)+\partial_1A_3(W_h;V_h)(V_h)&\!\!\!\!=\intO{\left(\dfrac{1}{\mu}+\nu_T(\wk_h)\right)|\nabla\vk_h|^2}\\
&\hspace{-1cm}+\dsum_{K\in\mathcal{T}}\dint_K{(C_Sh_K)^2\dfrac{|\nabla\wk_h:\nabla\wk_h|^2}{|\nabla\wk_h|}\;\dO},
\end{array}
\end{equation}
\[
\partial_1A_1(U_h,V_h)(V_h)+\partial_1A_2(U_h;V_h)(V_h)=\intO{(\wk_h\cdot\nabla\vk_h)\vk_h}+\intO{(\vk_h\cdot\nabla\wk_h)\vk_h}
\]
\begin{equation}\label{infsup2}
\le C_T^2\big(\normT{\wk_h}+\normld{\nabla\wk_h}\big)\normT{\vk_h}^2\le C_T^2(C_{\mub}+1)\normld{\nabla\wk_h}\normT{\vk_h}^2.
\end{equation}

Since $\dsum_{K\in\mathcal{T}}\dint_K{(C_Sh_K)^2\dfrac{|\nabla\wk_h:\nabla\vk_h|^2}{|\nabla\wk_h|}\;\dO}\ge0$, we have, thanks to (\ref{infsup1}) and (\ref{infsup2}),
\begin{equation}\label{infsup3}
\begin{array}{l}
\partial_1A(U_h,V_h;\mu)(V_h)\ge \intO{\left(\dfrac{1}{\mu}+\nu_T(\wk_h)\right)|\nabla\vk_h|^2}\medskip\\
-C_T^2(C_{\mub}+1)\normld{\nabla\wk_h}\normT{\vk_h}^2\ge (1-C_T^2(C_{\mub}+1)\normld{\nabla\wk_h})\normT{\vk_h}^2\medskip\\
\ge\big(1-C_T^2(C_{\mub}+1)\normld{\nabla\uk_D}
-C_T^2(C_{\mub}+1)\normld{\nabla\uk_h}\big)\normT{\vk_h}^2.
\end{array}
\end{equation}

Thus, if $\normld{\nabla\uk_h}\le\dfrac{1}{C^\star}-\normld{\uk_D}$ and $\normld{\uk_D}\le\dfrac{1}{C^\star} $, there exists $\beta_h>0$ such that,
\[
\partial_1A(U_h,V_h;\mu)(V_h)\ge\tilde{\beta_h}\normT{\vk_h}^2,\qquad \forall V_h\in X_h.
\]

\end{proof}

\begin{remark}
Since the operator $b(\vk_h,q_h;\mu)$ satisfies the discrete inf-sup condition $\alpha\normld{q_h}\le\D\sup_{\vk_h\in Y_h}\dfrac{b(\vk_h,p_h;\mu)}{\normh{\vk}}$, and thanks to Proposition \ref{prop::infsup}, we can prove that the operator $\partial_1 A$ satisfies the inf-sup condition in (\ref{conditions}). See \cite{TechTomas} for more details.
\end{remark}

Observe that as $\|\gk_D\|_{1/2,\Gamma_D}\le\|\wk_h\|_{1,\Omega}\le C_{\Omega}\normld{\nabla \wk_h}$, the condition needed in proposition \ref{prop::infsup}, $\normld{\nabla\wk_h}\le\dfrac{1}{C_T^2(C_{\mub}+1)}$, will only be possible if $\|\gk_D\|_{1/2,\Gamma_D}\le\dfrac{C_\Omega}{C_T^2(C_{\mub}+1)}$; thus the Dirichlet boundary data should be sufficiently small.

\section{\textit{A posteriori} error estimator}\label{sec::post}
\hspace{14pt}
In this section we construct the \textit{a posteriori} error bound estimator for the Greedy algorithm, which selects the snapshots for the reduced space $X_N$. In order to obtain this \textit{a posteriori} error bound estimator, we will take into account the well-posedness analysis of the reduced problem (\ref{pb}) done in the previous section. We start by proving that the directional derivative of the operator $A(\cdot,\cdot;\mu)$ is locally lipschitz.

\begin{lemma}\label{LemmaRho}
There exists a positive constant $\rho_T$ such that, $\forall U_h^1,U_h^2,Z_h,V_h \in X_h$,
\begin{equation}\label{ro}
\left|\partial_1A(U_h^1,V_h;\mu)(Z_h)-\partial_1A(U_h^2,V_h;\mu)(Z_h)\right|\le\rho_T\normX{U_h^1-U_h^2}\normX{Z_h}\|V_h\|_X.
\end{equation}
\end{lemma}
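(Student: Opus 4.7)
I would prove the estimate by decomposing the difference $\partial_1 A(U_h^1, V_h; \mu)(Z_h) - \partial_1 A(U_h^2, V_h; \mu)(Z_h)$ along the splitting (\ref{opA}). Since $A_0$ and $A_1$ are linear in their first argument, their directional derivatives do not depend on the base point and the $A_0$, $A_1$ contributions to the difference vanish identically. Moreover $\partial_1 A_2(\cdot; V_h)(Z_h)$ is itself linear in its first argument, so the $A_2$ contribution equals $\partial_1 A_2(\difUh; V_h)(Z_h)$, which is the sum of two cubic convective integrals; I would estimate them by a generalised H\"older inequality with exponents $(4,2,4)$ together with the Sobolev embedding $H^1(\Omega)\hookrightarrow L^4(\Omega)$ (through the constants $C_T$ and $C_{\mub}$ introduced before Proposition \ref{prop::cont}), which yields a bound of the required form $C\normX{\difUh}\normX{Z_h}\normX{V_h}$.

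The main obstacle is the Smagorinsky contribution. With $\wk_h^i = \uk_h^i + \uk_D$, I would split the difference of $\partial_1 A_3$ as $(I) + (II)$, where
\begin{equation*}
(I) = \intO{[\nu_T(\wk_h^1) - \nu_T(\wk_h^2)]\,\nabla\zk_h:\nabla\vk_h},
\end{equation*}
\begin{equation*}
(II) = \dsum_{K\in\Th}\dint_K (C_S h_K)^2 \left[\frac{(\nabla\wk_h^1:\nabla\zk_h)(\nabla\wk_h^1:\nabla\vk_h)}{|\nabla\wk_h^1|} - \frac{(\nabla\wk_h^2:\nabla\zk_h)(\nabla\wk_h^2:\nabla\vk_h)}{|\nabla\wk_h^2|}\right]\dO.
\end{equation*}
The piecewise structure of $\nu_T$ together with the reverse triangle inequality $\bigl||\nabla\wk_h^1|-|\nabla\wk_h^2|\bigr|\le |\nabla(\uk_h^1-\uk_h^2)|$ reduces $(I)$ to an integral of the same trilinear type as the one handled in the $A_2$ step, which is closed by H\"older and Sobolev.

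The delicate piece is $(II)$, because of the factor $|\nabla\wk|^{-1}$. The key pointwise algebraic estimate I would establish is that, for any $\xi_1, \xi_2 \in \R^{d\times d} \setminus \{0\}$ and any $\sigma, \tau \in \R^{d\times d}$,
\begin{equation*}
\left|\frac{(\xi_1:\sigma)(\xi_1:\tau)}{|\xi_1|} - \frac{(\xi_2:\sigma)(\xi_2:\tau)}{|\xi_2|}\right| \le 3\,|\xi_1-\xi_2|\,|\sigma|\,|\tau|.
\end{equation*}
The proof proceeds by adding and subtracting $(\xi_2{:}\sigma)(\xi_1{:}\tau)/|\xi_2|$, bounding the first resulting piece by the elementary identity $\bigl|\xi_1/|\xi_1| - \xi_2/|\xi_2|\bigr| \le 2|\xi_1-\xi_2|/|\xi_1|$ applied under a Cauchy--Schwarz step, and the second piece directly by Cauchy--Schwarz; the crucial observation is that the quadratic numerator absorbs one factor of $|\xi|$ and cancels the apparent singularity at the origin, so the Lipschitz constant is uniform in $\xi$. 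Applying this bound pointwise with $\xi_i = \nabla\wk_h^i$, $\sigma = \nabla\zk_h$, $\tau = \nabla\vk_h$, weighting by $(C_S h_K)^2$, integrating, and closing with H\"older and the Sobolev embedding (together with a discrete inverse inequality on $Y_h$, if needed to pass from $L^4$-gradients back to the $\normT{\cdot}$-norm) yields a bound of the same form as for $(I)$. Summing the contributions of $A_2$ and $A_3$ produces the Lipschitz constant $\romu$.
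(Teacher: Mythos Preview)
Your decomposition, your treatment of the $A_2$ contribution, and your pointwise Lipschitz estimate with constant $3$ for the map $\xi\mapsto(\xi{:}\sigma)(\xi{:}\tau)/|\xi|$ in part $(II)$ all match the paper's argument essentially line for line; the paper carries out exactly the add--subtract manipulation you describe and arrives at the same three pieces, each bounded by $|\nabla(\difuh)|\,|\nabla\zk_h|\,|\nabla\vk_h|$.

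There is, however, one point that needs correcting. You say that after the reverse triangle inequality $(I)$ ``reduces to an integral of the same trilinear type as the one handled in the $A_2$ step, which is closed by H\"older and Sobolev''. It is not of the same type: the $A_2$ integrals are of the form $\int|\uk|\,|\nabla\zk|\,|\vk|$ with only one gradient factor, so H\"older $(4,2,4)$ plus the Sobolev embedding $H^1\hookrightarrow L^4$ suffices. By contrast $(I)$ (and, after your pointwise bound, $(II)$ as well) is controlled by
\[
\sum_{K}(C_Sh_K)^2\int_K |\nabla(\difuh)|\,|\nabla\zk_h|\,|\nabla\vk_h|\,\dO,
\]
with \emph{three} gradient factors; Sobolev embedding bounds $\|\vk\|_{L^4}$, not $\|\nabla\vk\|_{L^4}$, so H\"older$+$Sobolev alone cannot close this in the $\normT{\cdot}$--norm. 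The paper handles both $(I)$ and $(II)$ by H\"older with exponents $(3,3,3)$ followed by the local inverse inequality $\|\nabla v_h\|_{0,3,K}\le Ch_K^{-d/6}\|\nabla v_h\|_{0,2,K}$, trading the $h_K^2$ weight against the lost integrability and producing the factor $h^{2-d/2}$ in the final constant $\rho_T=2C_T+4C\,C_S^2\,h^{2-d/2}$. You already invoke the inverse inequality for $(II)$ (as a parenthetical ``if needed''); it is in fact indispensable there, and equally so for $(I)$.
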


\begin{proof} We have that
\begin{align*}
&\partial_1A(U_h^1,V_h;\mu)(Z_h)-\partial_1A(U_h^2,V_h;\mu)(Z_h)=\intO{((\uk_h^1-\uk_h^2)\cdot\nabla \zk_h)\vk_h} \\
&+\intO{(\zk_h\cdot\nabla(\uk_h^1-\uk_h^2))\vk_h} + \intO{\left(\nu_T(\uk_h^1)-\nu_T(\uk_h^2)\right)\nabla\zk_h:\nabla\vk_h}\\
&+\intK{(C_Sh_K)^2\dfrac{\nabla\uk_h^1:\nabla\zk_h}{|\nabla\uk_h^1|}(\nabla\uk_h^1:\nabla\vk_h)}\\
&-\intK{(C_Sh_K)^2\dfrac{\nabla\uk_h^2:\nabla\zk_h}{|\nabla\uk_h^2|}(\nabla\uk_h^2:\nabla\vk_h)}.
\end{align*}

So, thanks to the triangle inequality, it follows
\begin{equation}\label{derSmago}
\begin{array}{lc}
&\left|\partial_1A(U^1_h,V_h;\mu)(Z_h)-\partial_1A(U^2_h,V_h;\mu)(Z_h)\right|\le\left|\intO{((\uk_h^1-\uk_h^2)\cdot\nabla \zk_h)\vk_h}\right|\\
&+\,\left|\intO{(\zk_h\cdot\nabla(\uk_h^1-\uk_h^2))\vk_h}\right|\\
&+\left|\intK{(C_Sh_K)^2\left(|\nabla\uk_h^1|-|\nabla\uk_h^2|\right)\nabla\zk_h:\nabla\vk_h}\right|\\
&+\left|\intK{(C_Sh_K)^2\dfrac{\nabla\uk_h^1:\nabla\zk_h}{|\nabla\uk_h^1|}(\nabla\uk_h^1:\nabla\vk_h)}\right.\\
&\left.-\intK{(C_Sh_K)^2\dfrac{\nabla\uk_h^2:\nabla\zk_h}{|\nabla\uk_h^2|}(\nabla\uk^2_h:\nabla\vk_h)}\right|.
\end{array}
\end{equation}

We bound each term separately in (\ref{derSmago}). For the first two terms, we use the relation between $\normlc{\cdot}$ and $\normT{\cdot}$ used in Proposition \ref{prop::cont}, and the fact that the T-norm is equivalent to the $H^1$-seminorm.

\[
\left|\intO{((\uk_h^1-\uk_h^2)\cdot\nabla \zk_h)\vk_h}\right|\le\intO{|\uk_h^1-\uk_h^2||\nabla \zk_h||\vk_h||}
\]\[
\le\normlc{\uk_h^1-\uk_h^2}\normld{\nabla\zk_h}\normlc{\vk_h}
\le C_T\normT{\uk^1_h-\uk_h^2}\normT{\zk_h}\normT{\vk_h}
\]\[
\le C_T\|\difUh\|_X\|Z_h\|_X\|V_h\|_X\\,
\]\[
\left|\intO{(\zk_h\cdot\nabla(\uk^1_h-\uk_h^2))\vk_h}\right|\le\intO{|\zk_h||\nabla(\difuh)||\vk_h|}
\]\[
\le\normlc{\zk_h}\normld{\nabla(\difuh)}\normlc{\vk_h}\le C_T\normT{\zk_h}\normT{\difu_h}\normT{\vk_h}
\]\[
\le C_T\|Z_h\|_X\|\difUh\|_X\|V_h\|_X.\\
%&\le\dfrac{C_T}{\beta_{\Nb}(\mub)^2}\normu{\difu}\normu{V}\|W\|_X
\]

To bound the third term in (\ref{derSmago}), we use the local inverse inequalities (\textit{cf.}\cite{desinv}),%of Lemma \ref{lem::locinv}, 
%\vspace{0.2cm}
\[
\left|\intK{(C_Sh_K)^2\left(|\nabla\uk_h^1|-|\nabla\uk_h^2|\right)\nabla\zk_h:\nabla\vk_h}\right|
\]\[
\le\intK{(C_Sh_K)^2\left||\nabla\uk_h^1|-|\nabla \uk^2_h|\right||\nabla\zk_h||\nabla\vk_h|}
\]\[
\le(C_Sh)^2\intO{|\nabla(\difuh)||\nabla\zk_h||\nabla\vk_h|}
\]\[
\le (C_Sh)^2\normlt{\nabla(\difuh)}\normlt{\nabla\zk_h}\normlt{\nabla\vk_h}
\]\[
\le C_S^2 h^{2-d/2}C\normld{\nabla(\difuh)}\normld{\nabla\zk_h}\normld{\nabla\vk_h}
\]\[
\le C_S^2 h^{2-d/2}C\normX{\difuh}\normX{\zk_h}\normX{\vk_h}.
\]

The last term in (\ref{derSmago}) is bounded as follows:
\[
\left|\intK{(C_Sh_K)^2\dfrac{\nabla\uk^1_h:\nabla\zk_h}{|\nabla\uk_h^1|}(\nabla\uk_h^1:\nabla\vk_h)}\right.
\]\[
\left.-\intK{(C_Sh_K)^2\dfrac{\nabla\uk_h^2:\nabla\zk_h}{|\nabla\uk_h^2|}(\nabla\uk_h^2:\nabla\vk_h)}\right|
\]\[
= \left|\intK{(C_Sh_K)^2\left[\frac{\nabla\uk_h^1:\nabla\zk_h}{|\nabla\uk_h^1|}\left(\nabla(\difuh):\nabla\vk_h\right)\right.\right.
\]\[
\left.+\frac{\nabla(\difuh):\nabla\zk_h}{|\nabla\uk_h^2|}(\nabla\uk_h^2:\nabla\vk_h)\right]}
\]\[
\left.+\intK{(C_Sh_K)^2\frac{(|\nabla\uk_h^2|-|\nabla\uk^1_h|)\nabla\uk_h^1:\nabla\zk_h}{|\nabla\uk_h^1||\nabla\uk^2_h|}(\nabla\uk_h^2:\nabla\vk_h)}\right|
\]\[
\le\intK{(C_Sh_K)^2|\nabla\zk_h||\nabla(\difuh)||\nabla\vk_h|}
\]\[
+\intK{(C_Sh_K)^2|\nabla(\difuh)||\nabla\zk_h||\nabla\vk_h|}
\]\[
+\intK{(C_Sh_K)^2\left||\nabla\uk_h^1|-|\nabla\uk^2_h|\right||\nabla\zk_h||\nabla\vk_h|}
\]\[
\le3(C_Sh)^2\normlt{\nabla(\difuh)}\normlt{\nabla\zk_h}\normlt{\nabla\vk_h}
\]\[
\le3C_S^2h^{2-d/2}C\normld{\nabla(\difuh)}\normld{\nabla\zk_h}\normld{\nabla\vk_h}\]\[
\le3C_S^2h^{2-d/2}C\normX{\difUh}\normX{Z_h}\normX{V_h}.
\]

Thus, we have just proved that $
|\partial_1A(U_h^1,V_h;\mu)(Z_h)-\partial_1A(U_h^2,V_h;\mu)(Z_h)|\le\rho_T\normX{\difUh}\normX{Z_h}\|V_h\|_X$, where, $\rho_T=2C_T+4C_Sh^{2-d/2}C.$
\end{proof}

We introduce the following \textit{supremizer} operator $T_N:X_h\ra X_h$, defined as
\begin{equation}\label{TN}
(T_NZ_h,V_h)_X=\partial_1A(U_N(\mu),V_h;\mu)(Z_h)\quad \forall V_h,Z_h\in X_h,
\end{equation}
such that
\begin{equation}
T_NZ_h=\arg\sup_{V_h\in X_h}\dfrac{\partial_1A(U_N(\mu),V_h;\mu)(Z_h)}{\|V_h\|_X}.
\end{equation}

Taking this definition into account, in order to guarantee the well-posedness of the reduced basis problem (\ref{rb}), in the same way as in the finite element problem (\ref{pb}), we define the inf-sup and continuity constants:

\begin{equation}\label{betaN}
0<\beta_{N}(\mu)\equiv\inf_{Z_h\in X_h}\sup_{V_h\in X_h}\dfrac{\partial_1A(\unmu,V_h;\mu)(Z_h)}{\|Z_h\|_X\|V_h\|_X}=\inf_{Z_h\in X_h}\dfrac{\|T_N Z_h\|_X}{\|Z_h\|_X},
\end{equation}
\begin{equation}\label{gammaN}
\infty>\gamma_{N}(\mu)\equiv\sup_{Z_h\in X_h}\sup_{V_h\in X_h}\dfrac{\partial_1A(\unmu,V_h:\mu)(V_h)}{\|Z_h\|_X\|V_h\|_X}=\sup_{Z_h\in X_h}\dfrac{\|T_N Z_h\|_{X}}{\|Z_h\|_X}
\end{equation}

\begin{theorem}\label{teor::unicidad}
Let $\mu\in\cD$, and assume that $\beta_N(\mu)>0$. If problem (\ref{pb}) admits a solution $U_h(\mu)$ such that
\[
\normX{U_h(\mu)-U_N(\mu)}\le\frac{\beta_N(\mu)}{\rho_T},
\]
then this solution is unique in the ball $B_X\left(U_N(\mu),\dfrac{\beta_N(\mu)}{\rho_T}\right)$.
\end{theorem}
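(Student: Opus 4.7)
The plan is to run the standard Brezzi--Rappaz--Raviart uniqueness argument: assume two solutions $U_h^1, U_h^2 \in B_X(U_N(\mu),\beta_N(\mu)/\rho_T)$ of (\ref{pb}), then combine a fundamental-theorem-of-calculus expansion of $A(\cdot,V_h;\mu)$ along the segment joining them with the inf-sup lower bound (\ref{betaN}) at $U_N(\mu)$ and the Lipschitz estimate of Lemma \ref{LemmaRho} to force $U_h^1 = U_h^2$.

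Concretely, I would first set $W_h = U_h^1 - U_h^2$ and write, for every $V_h \in X_h$,
\[
0 \;=\; A(U_h^1,V_h;\mu) - A(U_h^2,V_h;\mu) \;=\; \int_0^1 \partial_1 A\bigl(U_h^2+tW_h,V_h;\mu\bigr)(W_h)\,dt,
\]
using that the mapping $U\mapsto A(U,V_h;\mu)$ is continuously differentiable (the only nontrivial contribution is the Smagorinsky term, for which $s\mapsto |s|s$ is $C^1$). I would then rearrange this identity as
\[
\partial_1 A(U_N(\mu),V_h;\mu)(W_h) \;=\; \int_0^1 \bigl[\partial_1 A(U_N(\mu),V_h;\mu) - \partial_1 A(U_h^2+tW_h,V_h;\mu)\bigr](W_h)\,dt,
\]
so that the inf-sup quantity appearing in (\ref{betaN}) is isolated on the left-hand side.

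Next I would apply Lemma \ref{LemmaRho} inside the integrand to obtain, for every $t \in [0,1]$,
\[
\bigl|\partial_1 A(U_N(\mu),V_h;\mu)(W_h) - \partial_1 A(U_h^2+tW_h,V_h;\mu)(W_h)\bigr|
\;\le\; \rho_T\, \|U_N(\mu) - (U_h^2+tW_h)\|_X\, \|W_h\|_X\, \|V_h\|_X,
\]
and then use convexity of the ball: since $U_h^2+tW_h = (1-t)U_h^2 + tU_h^1$ and both $U_h^1,U_h^2$ lie in $B_X(U_N(\mu),\beta_N(\mu)/\rho_T)$, we have $\|U_N(\mu)-(U_h^2+tW_h)\|_X \le r$ for some $r < \beta_N(\mu)/\rho_T$. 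Integrating in $t$ and taking the supremum over $V_h\in X_h$ with the inf-sup definition (\ref{betaN}) yields
\[
\beta_N(\mu)\,\|W_h\|_X \;\le\; \sup_{V_h\in X_h}\frac{\partial_1 A(U_N(\mu),V_h;\mu)(W_h)}{\|V_h\|_X} \;\le\; \rho_T\, r\, \|W_h\|_X \;<\; \beta_N(\mu)\,\|W_h\|_X,
\]
forcing $W_h = 0$ and hence uniqueness.

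The main obstacle I anticipate is the step around the non-smoothness of $|\nabla\cdot|$ in $\nu_T$: one must justify that the fundamental theorem of calculus still applies to $t\mapsto A(U_h^2+tW_h,V_h;\mu)$ through the Smagorinsky contribution. I would resolve this by noting that $s\mapsto|s|s$ on $\mathbb{R}^{d\times d}$ is globally $C^1$ (its derivative being exactly the expression used in the definition of $\partial_1 A_3$), so the scalar function $t\mapsto A_3(U_h^2+tW_h;V_h)$ is $C^1$ with the expected derivative, and the identity is valid pointwise on each mesh element. A minor technical point is handling the case where $\|U_h^i - U_N(\mu)\|_X$ equals $\beta_N(\mu)/\rho_T$ (closed-ball version); this is taken care of by choosing $r$ to be the maximum of these two norms and exploiting the strict inequality $\rho_T r \le \beta_N(\mu) < \beta_N(\mu)(1+\epsilon)$, or by interpreting the ball as open, as is standard in BRR-type statements.
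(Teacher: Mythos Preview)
Your argument is correct and uses the same two ingredients as the paper (the Lipschitz estimate of Lemma~\ref{LemmaRho} and the inf-sup constant $\beta_N(\mu)$ of (\ref{betaN})), but the route is genuinely different. The paper does not argue directly on two solutions; instead it introduces the Newton-type map $H(Z_h;\mu)=Z_h-\DA{U_N(\mu)}^{-1}\cR{Z_h}$, uses the scalar Mean Value Theorem applied to $t\mapsto\langle\cR{tZ_h^1+(1-t)Z_h^2},V_h\rangle$ to write $\cR{Z_h^1}-\cR{Z_h^2}=\DA{\xi}(Z_h^1-Z_h^2)$, and then combines Lemma~\ref{LemmaRho} with the supremizer $T_N$ of (\ref{TN}) to obtain $\|H(Z_h^1)-H(Z_h^2)\|_X\le(\rho_T\alpha/\beta_N(\mu))\|Z_h^1-Z_h^2\|_X$, i.e.\ $H$ is a strict contraction on $B_X(U_N(\mu),\alpha)$ for $\alpha<\beta_N(\mu)/\rho_T$; uniqueness then follows from the Banach principle.

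What each buys: your direct subtraction/FTC argument is shorter and avoids inverting $\DA{U_N(\mu)}$ or constructing $H$; it also sidesteps the (harmless but slightly delicate) $V_h$-dependence of the intermediate point $\xi$ produced by the scalar MVT. The paper's approach, on the other hand, is not wasted effort: the very same map $H$ is reused in the proof of Theorem~\ref{Teorprinc}, where showing that $H$ maps $B_X(U_N(\mu),\alpha)$ into itself (for $\alpha\in[\alpha_-,\alpha_+]$) yields existence together with the \textit{a posteriori} bound $\Delta_N(\mu)$. Your closing remark about the closed/open ball is apt; the paper's contraction argument likewise needs $\alpha<\beta_N(\mu)/\rho_T$ strictly, so the ball in the statement should be read as open.
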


\begin{proof} The proof of this theorem is an extension of the proof of Lemma 3.1 in \cite{Deparis}. We  define the following operators:
\begin{itemize}
\item $\cR{\cdot}:X_h\ra X'_h$, defined as 
\begin{equation}\label{cR}
\left<\cR{Z_h},V_h\right>=A(Z_h,V_h;\mu)-F(V_h;\mu), \quad\forall Z_h,V_h\in X_h
\end{equation}
\item $\DA{U_h(\mu)} :X_h\ra X'_h$, defined, for $U_h(\mu)\in X_h$, as 
\begin{equation}\label{Thmu}
\left<\DA{U_h(\mu)}Z_h,V_h\right>=\partial_1A(U_h(\mu),V_h;\mu)(Z_h),\quad\forall Z_h,V_h \in X_h
\end{equation}
\item $H: X_h\ra X_h$, defined as
\begin{equation}\label{H}
H(Z_h;\mu)=Z_h-\DAn^{-1}\cR{Z_h}, \quad\forall Z_h\in X_h
\end{equation}
\end{itemize}

Note that $\DAn$ is invertible thanks to the assumption $\beta_N(\mu)>0$. Also note that $\DAn=T_N$ in $X_h'$. We express
\begin{equation}\label{difH}
\difH=(\difZ)-\DAn^{-1}(\cR{Z_h^1}-\cR{Z_h^2}).
\end{equation}

It holds
\begin{equation}\label{difRes}
\cR{Z_h^1}-\cR{Z_h^2}=\DA{\xi}(\difZ),
\end{equation}
where $\xi=\lambda Z_h^1-(1-\lambda)Z_h^2,$ for some $\lambda\in(0,1)$. To prove this, we define the operator $T:[0,1]\ra\R$, by $T(t)=\left<\cR{tZ_h^1+(1-t)Z_h^2},V_h\right>$, for all $V_h\in X$.
Then, $T(0)=\left<\cR{Z_h^2},V_h\right>$ and $T(1)=\left<\cR{Z_h^1},V_h\right>$. The operator $T$ is differentiable in $(0,1)$ and continuous in $[0,1]$, and
\[
T'(t)=\left<\DA{tZ^2+(1-t)Z^1}(\difZ),V_h\right>.
\]

Thus, (\ref{difRes}) follows from the Mean Value Theorem in $\R$. Now, multiplying (\ref{difH}) by $\DAn$ and applying this last property, we can write
\[
\DAn(\difH)=\left[\DAn-\DA{\xi}\right](\difZ).
\]

Then, thanks to (\ref{ro}) and this last equality, it follows
\[
\left<\DAn(\difH),V_h\right>\le\rho_T\normX{\unmu-\xi}\normX{\difZ}\|V_h\|_X.
\]

Now, applying the definitions of $\beta_N(\mu)$, $T_N$, $\DAn$, and this last property, we can obtain
\[
\beta_N(\mu)\normX{\difH}\|T_N(\difH)\|_X
\]\[
\le\|T_N(\difH)\|_X^2
\]\[
=\big(\TN{\difH},\TN{\difH}\big)_X
\]\[
=\left<\DAn(\difH), \TN{\difH}\right>
\]\[
\le\rho_T\normX{\unmu-\xi}\normX{\difZ}\|\TN{\difH}\|_X%=\romu\normu{\unmu-\xi}\normu{\difu}.
\]

We have proved that $
\normX{\difH}\le\dfrac{\rho_T}{\beta_N(\mu)}\normX{\unmu-\xi}\normX{\difZ}$.

If $Z^1$ and $Z^2$ are in $B_X(\unmu,\alpha)$ then, $\normX{\unmu-\xi}\le\alpha$, and,
\[
\normX{\difH}\le\frac{\romu}{\beta_N(\mu)}\alpha\normX{\difZ}.
\]

Then, $\Hw{\cdot}$ is a contraction if $\alpha<\dfrac{\beta_N(\mu)}{\rho_T}.$ So it follows that there can exist at most one fixed point of $\Hw{\cdot}$ inside $B_X\left(\unmu,\dfrac{\beta_N(\mu)}{\rho_T}\right)$, and hence, at most one solution $U_h(\mu)$ to (\ref{pb}) in this ball.
\end{proof}

At this point, let us define the \textit{a posteriori} error bound estimator by
\begin{equation}\label{delta}
\dn=\frac{\beta_N(\mu)}{2\rho_T}\left[1-\sqrt{1-\taun}\right],
\end{equation}
where $\taun$ is given by:
\begin{equation}
\taun=\frac{4\en\rho_T}{\beta_N^2(\mu)},\label{tau}
\end{equation}
with,
\begin{equation}
\en=\|\cR{\unmu}\|_{X'}.\label{eps}
\end{equation}

The suitability of this \textit{a posteriori} error bound estimator is stated by:

\begin{theorem}\label{Teorprinc}
Assume that $\bmu>0$ and $\taun\le1$ for all $\mu\in\cD$. Then there exists a unique solution $U_h(\mu)$ of (\ref{pb}) such that the error with respect $U_N(\mu)$, solution of (\ref{rb}), is bounded by the \textit{a posteriori} error bound estimator, i.e.,
\begin{equation}\label{err}
\normX{U_h(\mu)-\unmu}\le\dn,
\end{equation}
with effectivity
\begin{equation}\label{efec}
\dn\le\left[\frac{2\gamma_{N}(\mu)}{\beta_N(\mu)}+\taun\right]\normX{U_h(\mu)-\unmu}.
\end{equation}
\end{theorem}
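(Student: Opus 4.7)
My plan is to apply the Banach fixed point theorem to the operator $\Hw{\cdot}$ introduced in (\ref{H}), whose fixed points in $X_h$ coincide with the solutions of (\ref{pb}). The contraction estimate
\[
\normX{\Hw{Z_h^1}-\Hw{Z_h^2}}\le\frac{\romu}{\bmu}\,\alpha\,\normX{\difZ}
\]
on the ball $B_X(\unmu,\alpha)$ has already been established inside Theorem \ref{teor::unicidad}. What remains is to find a radius $\alpha$ on which $\Hw{\cdot}$ is self-mapping and strictly contractive, and to verify that the value $\dn$ of (\ref{delta}) is admissible.

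For any $Z_h\in B_X(\unmu,\alpha)$ I would split
\[
\normX{\Hw{Z_h}-\unmu}\le\normX{\Hw{Z_h}-\Hw{\unmu}}+\normX{\Hw{\unmu}-\unmu}.
\]
The first term is bounded by $\frac{\romu}{\bmu}\alpha^2$ by the contraction estimate. For the second, $\Hw{\unmu}-\unmu=-\DAn^{-1}\cR{\unmu}$, and combining (\ref{TN}) with (\ref{betaN}) gives $\|\DAn^{-1}\|_{X'\to X}\le 1/\bmu$, so this term is bounded by $\en/\bmu$. Requiring the sum to be $\le \alpha$ leads to the quadratic inequality $\romu\alpha^2-\bmu\,\alpha+\en\le 0$. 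The standing hypothesis $\taun\le 1$ makes the discriminant non-negative, and the smaller root is precisely $\dn$ as in (\ref{delta}). Taking $\alpha=\dn$ yields $\dn\le\bmu/(2\romu)<\bmu/\romu$, so the contraction constant $\frac{\romu}{\bmu}\dn$ is strictly less than one, and Banach's theorem delivers a unique fixed point $U_h(\mu)\in B_X(\unmu,\dn)$ satisfying (\ref{err}); uniqueness in the larger ball $B_X(\unmu,\bmu/\romu)$ follows from Theorem \ref{teor::unicidad}.

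For the effectivity bound (\ref{efec}) I would estimate $\en$ from above in terms of $e:=\normX{U_h(\mu)-\unmu}$. Since $\cR{U_h(\mu)}=0$, the mean value identity (\ref{difRes}) used in Theorem \ref{teor::unicidad} gives $\cR{\unmu}=\DA{\xi}(\unmu-U_h(\mu))$ for some $\xi$ on the segment joining $\unmu$ and $U_h(\mu)$. Writing $\DA{\xi}=\DAn+(\DA{\xi}-\DAn)$ and invoking (\ref{gammaN}) together with Lemma \ref{LemmaRho} yields $\en\le\gamma_N(\mu)\,e+\romu\,e^2$. The elementary bound $1-\sqrt{1-x}\le x$ for $x\in[0,1]$ applied to (\ref{delta}) gives $\dn\le\frac{\bmu}{2\romu}\taun=\frac{2\en}{\bmu}$, so
\[
\dn\le\frac{2\gamma_N(\mu)}{\bmu}\,e+\frac{2\romu}{\bmu}\,e^2.
\]
Finally, using $e\le\dn\le 2\en/\bmu$, the quadratic term can be absorbed as $\frac{2\romu}{\bmu}e^2\le\frac{4\romu\en}{\bmu^2}e=\taun\,e$, which produces (\ref{efec}).

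The main technical obstacle is the effectivity estimate: the mean-value linearization of the residual leaves a quadratic remainder in $e$, and closing the bound requires feeding the already-proved upper bound $e\le 2\en/\bmu$ back into the estimate so that the remainder is rewritten as $\taun\,e$. The rest is standard Brezzi--Rappaz--Raviart bookkeeping, once the fixed-point operator $\Hw{\cdot}$ and its contraction and inversion properties are in place from Theorem \ref{teor::unicidad}.
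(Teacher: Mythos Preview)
Your proposal is correct and follows essentially the same Brezzi--Rappaz--Raviart fixed-point strategy as the paper: the self-mapping condition on $B_X(\unmu,\alpha)$ leads to the same quadratic in $\alpha$ with smaller root $\dn$, and the effectivity argument (mean-value linearization of the residual, splitting $\DA{\xi}=\DAn+(\DA{\xi}-\DAn)$, then bounding $\dn\le 2\en/\bmu$ and absorbing the quadratic remainder as $\taun\,e$) matches the paper's proof line by line. The only cosmetic difference is that you obtain the self-mapping bound via the triangle inequality $\normX{\Hw{Z_h}-\unmu}\le\normX{\Hw{Z_h}-\Hw{\unmu}}+\normX{\Hw{\unmu}-\unmu}$ and reuse the contraction estimate from Theorem~\ref{teor::unicidad}, whereas the paper re-derives it directly; both yield the identical inequality $\normX{\Hw{Z_h}-\unmu}\le\frac{\romu}{\bmu}\alpha^2+\frac{\en}{\bmu}$.
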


\begin{proof} 
To prove (\ref{err}), let $\alpha>0$ and $Z_h\in X_h$ such that $\normX{\unmu-Z_h}\le\alpha$. We use the notations introduced in the proof of Theorem \ref{teor::unicidad}. We consider
\[
\Hw{Z_h}-\unmu=Z_h-\unmu-\DAn^{-1}\cR{Z_h}
\]\[
=Z_h-\unmu-\DAn^{-1}\left[\cR{Z_h}-\cR{\unmu}\right]
\]\[
-\DAn^{-1}\cR{\unmu}
\]

Multiplying by $\DAn$, we obtain
\[
\left<\DAn(\Hw{Z_h}-\unmu),V_h\right>=\left<\DAn(Z_h-\unmu),V_h\right>
\]\[
-\left<\cR{Z_h}-\cR{\unmu},V_h\right>-\left<\cR{\unmu},V_h\right>,\quad\forall V_h\in X_h.
\]

As in the proof of Theorem \ref{teor::unicidad}, it holds
\[
\cR{Z_h}-\cR{\unmu}=\DA{\xi(\mu)}(Z_h-\unmu),
\]
where $\xi(\mu)=t^*Z_h+(1-t^*)\unmu$, $t^*\in(0,1)$.

Due to this and Lemma \ref{LemmaRho}, we obtain:
\[
\left<\DAn(\Hw{Z_h}-\unmu),V_h\right>=\dual{\DAn(Z_h-\unmu)}
\]\[
-\dual{\DA{\ximu}(Z_h-\unmu)}-\dual{\cR{\unmu}}
\]\[
=\dual{\big(\DAn-\DA{\ximu}\big)(Z_h-\unmu)}-\dual{\cR{\unmu}}
\]\[
\le\rho_T\normX{\unmu-\ximu}\normX{Z_h-\unmu}\|V_h\|_X+\en\|V_h\|_X
\]\[
\le\big(\rho_T\normX{Z_h-\unmu}^2+\en\big)\normX{V_h}
\]

Then, using the same arguments as in Theorem \ref{teor::unicidad},
\[
\beta_N(\mu)\normX{\Hw{Z_h}-\unmu}\normX{\Tnmu(\Hw{Z_h}-\unmu)}\le\|\Tnmu(\Hw{Z_h}-\unmu)\|_{X}^2
\]\[
=\Big(\Tnmu\big(\Hw{Z_h}-\unmu\big),\,\Tnmu\big(\Hw{Z_h}-\unmu\big)\Big)_X
\]\[
=\left< \DAn(\Hw{Z_h}-\unmu),\Tnmu(\Hw{Z_h}-\unmu)\right>
\]\[
\le\big(\rho_T\normX{Z_h-\unmu}^2+\en\big)\normX{\Tnmu(\Hw{Z_h}-\unmu)}.
\]

Then, as $Z_h\in B_X\left(\unmu,\alpha\right)$, we have 
\begin{equation}\label{seggrado}
\normX{\Hw{Z_h}-\unmu}<\dfrac{\rho_T}{\bmu}\alpha^2+\dfrac{\en}{\bmu}.
\end{equation}

In order to ensure that $H$ maps $B_X(\unmu,\alpha)$ into a part of itself, we are seeking the values of $\alpha$ such that $
\dfrac{\rho_T}{\bmu}\alpha^2+\dfrac{\en}{\bmu}\le\alpha$. This holds if $\alpha$ is between the two roots of the second order equation
$\romu\alpha^2-\bmu\alpha+\en=0,$ which are

\begin{equation}\label{roots}
\alpha_{\pm}=\frac{\bmu\pm\sqrt{\bmu^2-4\romu\en}}{2\romu}=\frac{\bmu}{2\romu}\left[1\pm\sqrt{1-\frac{4\romu\en}{\bmu^2}}\right].
\end{equation}

Observe that as $\taun\le1$, then $\alpha_{-}\le\alpha_{+}\le\dfrac{\beta_N(\mu)}{\rho_T}$. Consequently, if $\alpha_{-}\le\alpha\le\alpha_{+}$, there exists a unique solution $U_h(\mu)$ to (\ref{pb}) in the ball  $B_X(\unmu,\alpha)$.

To obtain (\ref{err}) observe that from (\ref{roots}), the lowest value (i.e. the best error bound) corresponds to $\alpha=\alpha_{-}=\dn$. To prove (\ref{efec}), let us define the error $E_h(\mu)=U_h(\mu)-\unmu$, and the residual $R(\mu)$, such that
\[
(R(\mu),V_h)_X=-\dual{\cR{\unmu}}=F(V_h;\mu)-A(\unmu,V_h;\mu)
\]\[
=A(U_h(\mu),V_;\mu)-A(\unmu,V_h;\mu).
\]

Note that, from (\ref{eps}), $\|R(\mu)\|_{X}=\en$. We observe that the following relation holds, for some $t^*\in(0,1)$:
\[
A(U_h(\mu),V_h;\mu)-A(\unmu,V_h;\mu)=\partial_1A\big(t^*U_h(\mu)+(1-t^*)\unmu,V_h;\mu\big)(E_h(\mu)).
\]
Thus, we have that
\[
\|R(\mu)\|_{X}^2=\Big[\partial_1A\big(t^*U_h(\mu)+(1-t^*)\unmu,R(\mu);\mu\big)-\partial_1A(\unmu,R(\mu);\mu)\Big](E_h(\mu))
\]\[
+\partial_1A(\unmu,R(\mu);\mu)(E_h(\mu)).
\]

Thus, thanks to Lemma \ref{LemmaRho}, and taking into account the definition of $\gamma_N(\mu)$ by (\ref{gammaN}), we obtain
\[
\normX{R(\mu)}^2\le\romu\normX{t^*(U_h(\mu)-U_N(\mu))}\normX{E_h(\mu)}\normX{R(\mu)}+\gamma_N(\mu)\normX{E_h(\mu)}\normX{R(\mu)}.
\]

Then $\en=\romu\normX{E(\mu)}^2+\|R(\mu)\|_{X}\le\gamma_{N}(\mu)\normX{E(\mu)}$. Since $0\le\taun\le1$  we have that
\[
\frac{2\romu}{\bmu}\dn\le\taun,
\]
and then $\dn\le2\en/\bmu$. It follows that
\[
\dn\le\frac{2\romu}{\bmu}\normX{E(\mu)}^2+\frac{2\gamma_{N}(\mu)}{\bmu}\normX{E(\mu)}.
\]

Thanks to (\ref{err}), we know that $\normX{E(\mu)}\le\dn$, then $\dfrac{2\rho_T}{\beta_N(\mu)}\normX{E(\mu)}\le\taun$. It follows (\ref{efec}), i.e.,
\[
\dn\le\left[\frac{2\gamma_{N}(\mu)}{\bmu}+\taun\right]\normX{U(\mu)-\unmu}.
\]
\end{proof}

\section{Approximation of the eddy viscosity term}\label{sec::EIM}
In this section we approximate the non-linear turbulent eddy viscosity term by the Empirical Interpolation Method \cite{EIM1,EIM2}. The EIM allows the construction an offline tensorized representation of this term that will be used in the online calculations.

Let us denote $g(\mu):=g(x;\wk_h(\mu))=|\nabla\wk_h(\mu)|(x)$. The finality of using the EIM is decoupling the $\mu$-dependence from the spatial dependence of the function $g(\mu)$, i.e., 
\begin{equation}
g(\mu)\approx\mathcal{I}_{M}[g(\mu)],%=\dsum_{j=1}^M\sigma_j(\mu)q_j(x),
\end{equation}
where we denote by $\mathcal{I}_{M}[g(\mu)]$ the empirical interpolate of $g(\mu)$. The EIM, consists of constructing a reduced-basis space $W_M=\mbox{span}\{q_1(\mu),\dots,q_M(\mu)\}$, selecting these  basis functions by a greedy procedure, with snapshots of $g(\mu)$. With this technique, we are able to approximate the non-linear Smagorinsky term by a trilinear form, in the following way: $a_S(\wk_N;\wk_N,\vk_N;\mu)\approx \hat{a}_{S}(\wk_N,\vk_N;\mu)$, where, 
\[
\hat{a}_{S}(\wk_N,\vk_N;\mu)=\D\sum_{k=1}^M\sigma_k(\mu)s(q_k,\wk_h,\vk_h),
\]
with
$
s(q_k,\wk_h,\vk_h)=\D\intK{(C_Sh_K)^2q_k\nabla\wk:\nabla\vk}.
$

Here $\sigma_k(\mu),$ for $k=1,\dots,M$, is the solution of a lower-triangular linear system, where the second member is the value of $g(x;\wk_h(\mu))$ in some certain points $x_i$. We refer to \cite{EIM1} for more details. 
%Futhermore,
%
%\begin{equation}
%\hat{a}_{S}(q_k,\wk_N,\vk_N)=\intK{(C_Sh_K)^2q_k\nabla\wk:\nabla\vk}.
%\end{equation}

This technique allows us to linearize the eddy viscosity term. Let us recall the RB problem, with this last approximation of the Smagorinsky term:
\begin{equation}\label{RBPB}\left\{\begin{array}{l}
\mbox{Find } (\uk_N,p_N)\in Y_N\times M_N\mbox{ such that }\forall\vk_N\in Y_N, \forall q_N\in M_N\vspace{0.2cm}\\
a(\uk_N,\vk_N;\mu)+b(\vk_N,p_N;\mu)+ \hat{a}_S(\wk_N,\vk_N;\mu) \\
+c(\uk_D,\uk_N,\vk_N;\mu)+c(\uk_N,\uk_D,\vk_N;\mu)+c(\uk_N,\uk_N,\vk_N;\mu)=F(\vk_N;\mu)\\
b(\uk_N,q_N;\mu)=0\end{array}\right.
\end{equation}

The solution $(\uk_N(\mu),p_N(\mu))\in X_N$ of (\ref{RBPB}) can be expressed as a linear combination of the basis functions:
\[
\uk_N(\mu)=\sum_{j=1}^{2N}u_j^N(\mu)\zeta^{\vk}_j,\quad p_N(\mu)=\sum_{j=1}^Np_j^N(\mu)\xi^p_j.
\]

Taking into account this representation, for the bilinear terms in (\ref{RBPB}), we store the parameter-independent matrices for the offline phase, as in \cite{Manzoni}, defined as:
\begin{equation}\label{RB matrices}
\begin{array}{ll}
(\mathbb{A}_N)_{ij}=a(\zeta^{\vk}_j,\zeta^{\vk}_i),\; (\mathbb{B}_N)_{li}=b(\zeta^{\vk}_i,\xi^p_l),%\intO{\nabla\zeta^{\vk}_j:\nabla\zeta^{\vk}_i},
&\; i,j=1,\dots,2N, l=1,\dots,N.\vspace{0.1cm}\\
(\mathbb{D}_N)_{ij}=c(\uk_D,\zeta^{\vk}_j,\zeta^{\vk}_i)+c(\zeta^{\vk}_j,\uk_D,\zeta^{\vk}_i),%\intO{(\uk_D\cdot\zeta^{\vk}_j)\zeta^{\vk}_i+(\zeta^{\vk}_j\cdot\nabla\uk_D)\zeta^{\vk}_i},
&\; i,j=1,\dots,2N.
\end{array}
\end{equation}

For the convective and the Smagorinsky terms, we need to store a parameter-independent tensors of order three for the offline phase,  defined as:
\begin{equation}\label{RB Tensors}
\begin{array}{ll}
(\mathbb{C}_N(\zeta^{\vk}_s))_{ij}=c(\zeta^{\vk}_s,\zeta^{\vk}_j,\zeta^{\vk}_i), %\intO{(\zeta^{\vk}_k\cdot\nabla\zeta^{\vk}_j)\zeta^{\vk}_i}
&\; i,j,s=1,\dots, 2N, \vspace{0.1cm}\\
(\mathbb{S}_N(q_s))_{ij}=s(q_s,\zeta^{\vk}_j,\zeta^{\vk}_i),%\intK{(C_Sh_K)^2q_s\nabla\zeta^{\vk}_j:\nabla\zeta^{\vk}_i}
&\;i,j=1,\dots,2N, s=1,\dots, M.
\end{array}
\end{equation} 

With this tensor representation, it holds that 
\[
c(\uk_N,\zeta^{\vk}_j,\zeta^{\vk}_i)=\sum_{s=1}^{2N}u^N_s(\mu)\mathbb{C}_N(\zeta^{\vk}_s) \quad \mbox{and} \quad
\hat{a}_S(\zeta^{\vk}_j;\zeta^{\vk}_i)=\sum_{s=1}^{2N}\sigma_s(\mu)\mathbb{S}_N(q_s),
\]
and thanks to that, we are able to solve problem (\ref{RBPB}), linearized by a semi-implicit evolution approach. We remark that the treatment of the approximation of the eddy viscosity term in the offline/online phase is similar to the treatment of the convective term, thanks to the tensorization done in this section.

\section{Numerical results}\label{sec::results}

In this section, we present some numerical tests for the reduced order Smagorinsky model. We will consider two test cases: the Backward-facing step problem and the Lid-driven cavity problem. For both cases, we consider a range of Reynolds number for which it is known that a steady regime takes place. We obtain rates of speed-up of the computational time from several hundreds to several thousands.

\subsection{Backward-facing step flow (2D)} \label{sec::BFS} 

In this test, we show the numerical results of a Smagorinsky reduced order model for the backward-facing step (\textit{cf.} \cite{armali}).  
%In Fig. \ref{Geom}, we represent the geometry of our problem, with the boundaries properly identified. 
%
%\pgfdeclareimage[width=0.6\linewidth]{Domain}{}
%\begin{figure}[H]
%\centering
%  \begin{tikzpicture}[font=\footnotesize]
%    \pgftext[at=\pgfpoint{0mm}{-1mm},left,base]{\pgfuseimage{Domain}}
%    \draw (0, 13mm) node {$\Gamma_{D_g}$};
%    \draw (7.8, 10.5mm) node {$\Gamma_	N$};
%    \draw (5.2, 12.2mm) node {$\Gamma_{D_0}$};
%%    \draw (5, 16.3mm) node {$\Gamma_{D_0}$};
%    \draw (2.3, 8.2mm) node {$\Gamma_{D_0}$};
%  \end{tikzpicture}
%\caption{Domain $\Omega$ with the different boundaries identified.\label{Geom}}
%\end{figure}
The backward-facing step flow is laminar and reaches a steady state solution roughly up to $Re\simeq1000$, then it becomes transitional up to $Re\simeq5000$. For larger values, the regime become turbulent. (\textit{cf.} \cite{TomasSmago}). The parameter that we are considering for this numerical test case is the Reynolds number, with values $\mu=Re\in\cD=[50,450]$. This means that the regime we consider in this test is fully laminar.

For the offline phase, we compute the FE approximation with the Taylor-Hood finite element, i.e, we consider $\mathbb{P}2-\mathbb{P}1$ for velocity-pressure. The mesh selected for this problem is composed of 10842 triangles and 5703 nodes. The FE steady state solution is computed through a semi-implicit evolution approach, and we conclude that the steady solution is reached when the relative error between two iterations is below $\varepsilon_{FE}=10^{-10}$. The numerical scheme to solve the Smagorinsky model in each time step reads

\begin{equation}
\left\{\begin{array}{l}
\mbox{Find } (\uk_h^{n+1},p_h^{n+1})\in Y_h\times M_h\mbox{ such that }\forall\vk_h\in Y_h, \forall q_h\in M_h\vspace{0.2cm}\\
\left(\dfrac{\uk_h^{n+1}-\uk_h^n}{\Delta t}\right)_{\Omega}+a(\uk_h^{n+1},\vk_h;\mu)+b(\vk_h,p_h^{n+1};\mu)+a_S(\wk_h^{n};\wk_h^{n+1},\vk_h;\mu)\vspace{0.1cm}\\
c(\uk_h^{n},\uk_h^{n+1},\vk;\mu)+c(\uk_D,\uk_h^{n+1},\vk;\mu)+c(\uk_h^{n+1},\uk_D,\vk;\mu)=F(\vk_h;\mu)\vspace{0.2cm}\\
b(\uk_h^{n+1},q_h;\mu)=0
\end{array}\right.
\end{equation}

To implement the Greedy algorithm, we compute beforehand the inf-sup constant $\beta_N(\mu)$, (\ref{betaN}), and the Sobolev embedding constant $C_T$, as both appear in the \textit{a posteriori} error bound estimator. Due to the fact that $U_N(\mu)$ is intended to be a good approximation of $U_h(\mu)$, in practice we use the value of $\beta_h(\mu)$ in place of $\beta_N(\mu)$. To compute the inf-sup stability factor, we consider the heuristic strategy introduced in \cite{infsup}. This heuristic technique consists in interpolating the $\beta_h(\mu)$ map by the adaptive Radial Basis Function algorithm (c.f. \cite{rbf_adap}), for some selected values of $\mu\in\cD$. On the other hand, to compute the Sobolev embedding constant, we use the fixed point algorithm described in \cite{Manzoni} (section 8). 

In Fig. \ref{fig::BetaStep} (left) we compare $\rho_{\mub}/\beta_{\mub}(\mu)$, described in \cite{Deparis}, and $\rho_T/\beta_h(\mu)$. These quantities are crucial for the number of basis functions necessary to assure that $\taun<1$ . Since for our problem, $\rho_T/\beta_h(\mu)<\rho_{\mub}/\beta_{\mub}(\mu)$, the number of bases needed to guarantee $\taun<1$ is lower when we use the norm $\normT{\cdot}$ instead the natural norm.

\begin{figure}[h]
\centering
\includegraphics[width=0.45\linewidth]{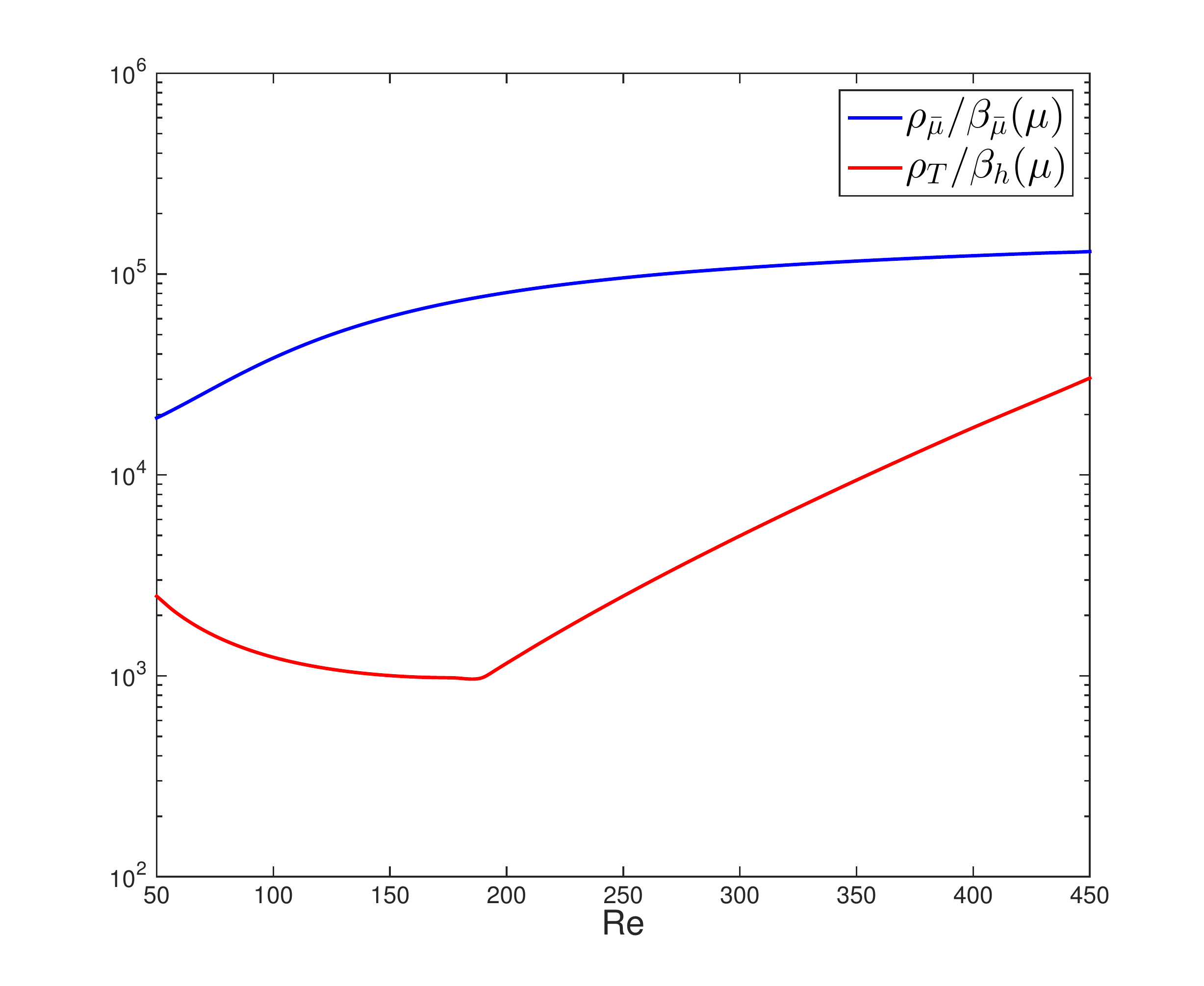}
\includegraphics[width=0.45\linewidth]{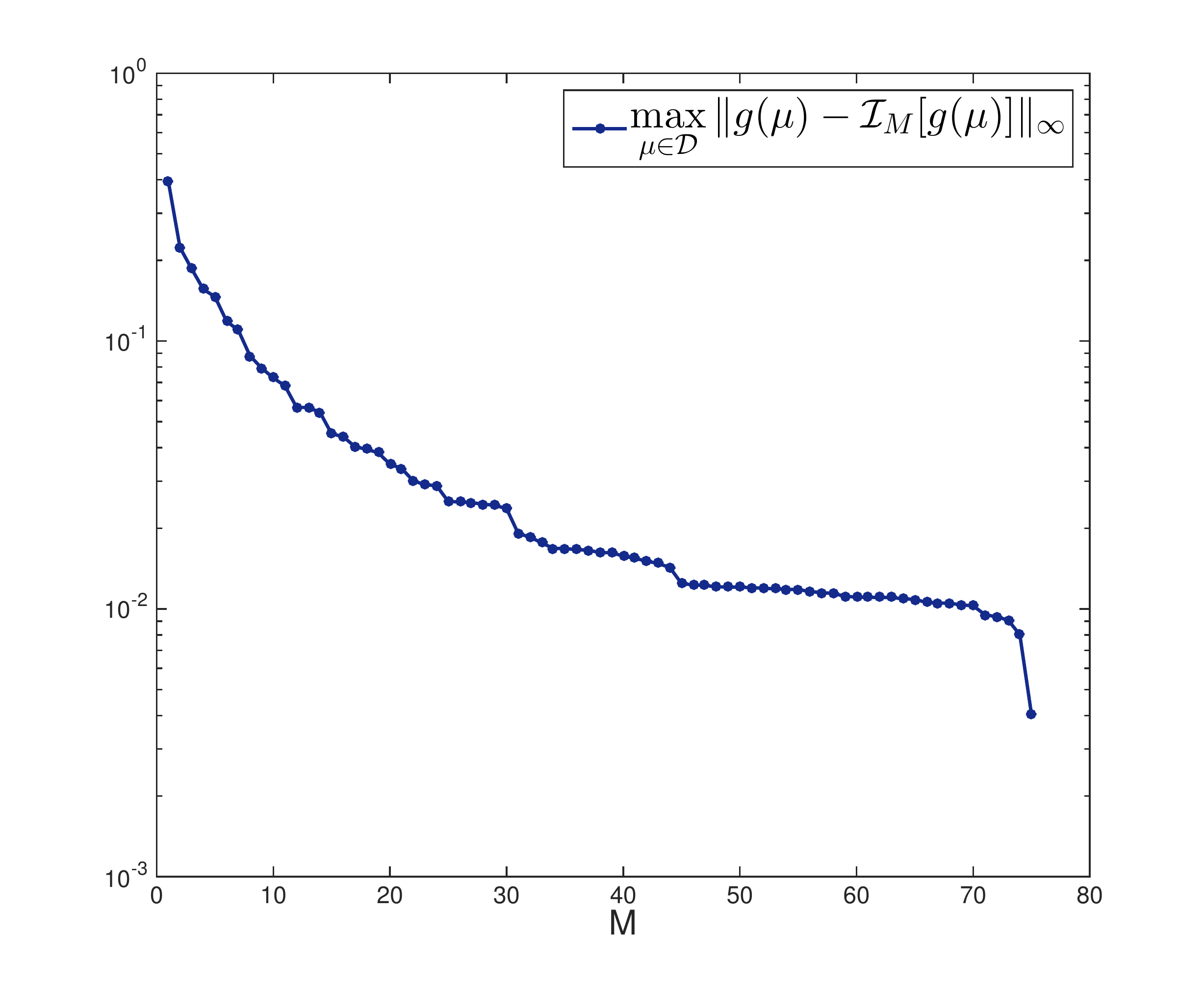}
\vspace{-0.3cm}
\caption{Left: comparison $\rho_{\mub}/\beta_{\mub}(\mu)$ and $\rho_T/\beta_h(\mu)$. Right: Convergence of the EIM algorithm.}\label{fig::BetaStep}
\end{figure} 

To compute our reduced-basis space, we start by computing the basis functions to construct the EIM to approximate the eddy diffusion term. To evaluate the error between $g(\mu)$ and the empirical interpolation, we precompute a certain number of snapshots $\uk_h(\mu)$, $\mu\in\cD_{EIM}$. We stop the construction of the EIM bases when we reach a relative error below $\veps_{EIM}=5\cdot10^{-3}$. This error is reached for 75 basis functions for the EIM. In Fig. \ref{fig::BetaStep} (right), we show the evolution of the relative error, in the infinite norm, between $g(\mu)$ and its empirical interpolate.

Also, to obtain an initial guess such that $\tau_N\lesssim1$, as needed by our error estimator, we first approximate the reduced manifold with some POD modes (see e.g. \cite{Ballarin}), and then we start our Greedy algorithm. We select 10 POD modes using the snapshots computed for the EIM to start the Greedy algorithm.

In Fig. \ref{fig::Greedyerr} (left), we can observe the evolution of the \textit{a posteriori} error bound within the Greedy algorithm.  We observe that it is indeed a good error estimator, with an efficiency factor close to 10 for all $\mu\in\cD$ (Fig. \ref{fig::Greedyerr} (right)). Due to Theorem \ref{Teorprinc}, $\dn$ exists when $\tau_N(\mu)\le1$. While $\tau_N(\mu)>1$, we use as \textit{a posteriori} error bound estimator the proper $\tau_N(\mu)$. We stop the Greedy algorithm when we reach a tolerance of $\veps_{RB}=7\cdot10^{-5}$, obtained when $N=N_{max}=17$. In Fig. \ref{fig::Greedyerr} (right) we show the value of the \textit{a posteriori} error bound estimator and the relative error for all $\mu\in\cD$, at $N=N_{\max}$.

\begin{figure}[h]
\centering
\includegraphics[width=0.45\linewidth]{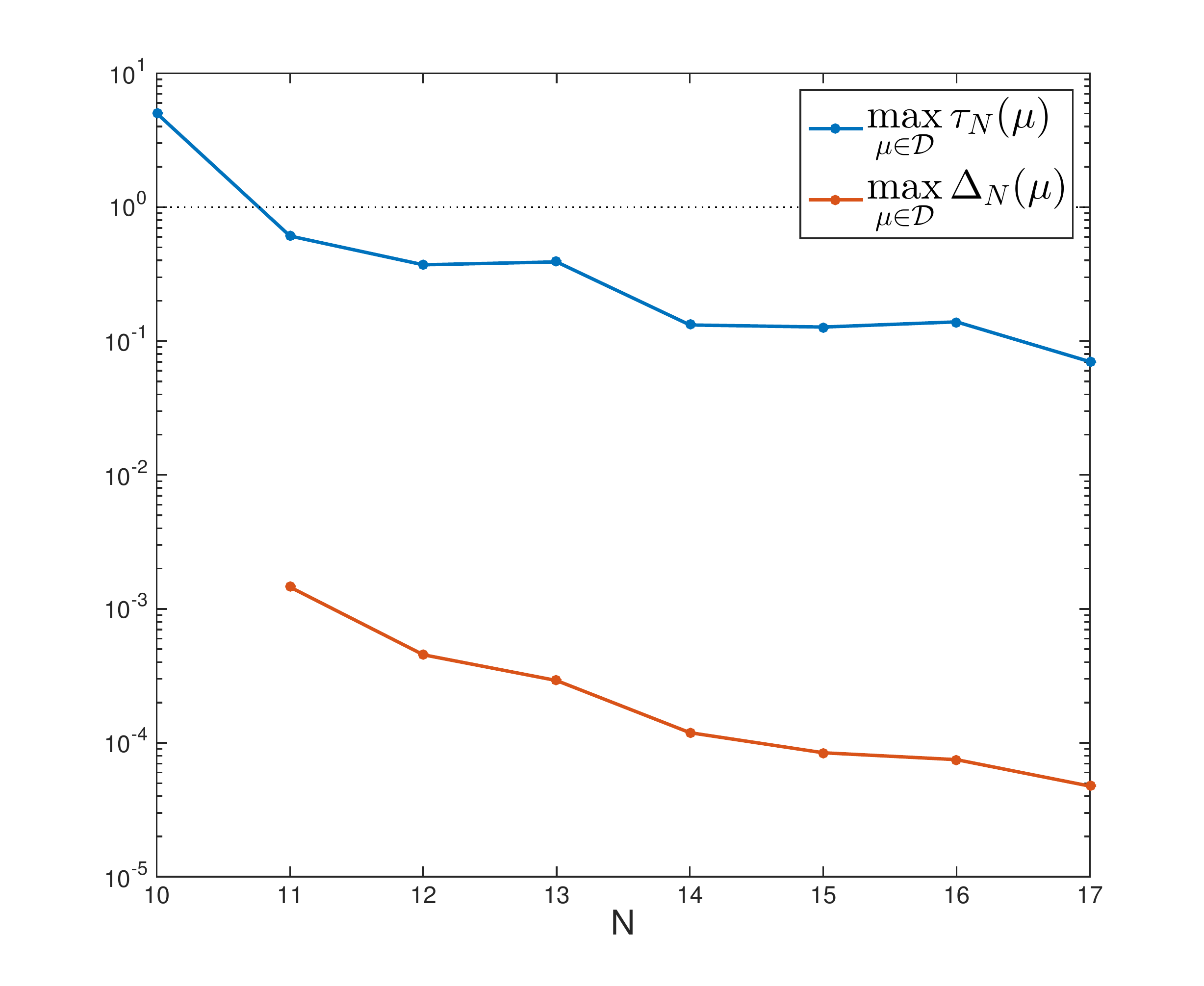}
\includegraphics[width=0.45\linewidth]{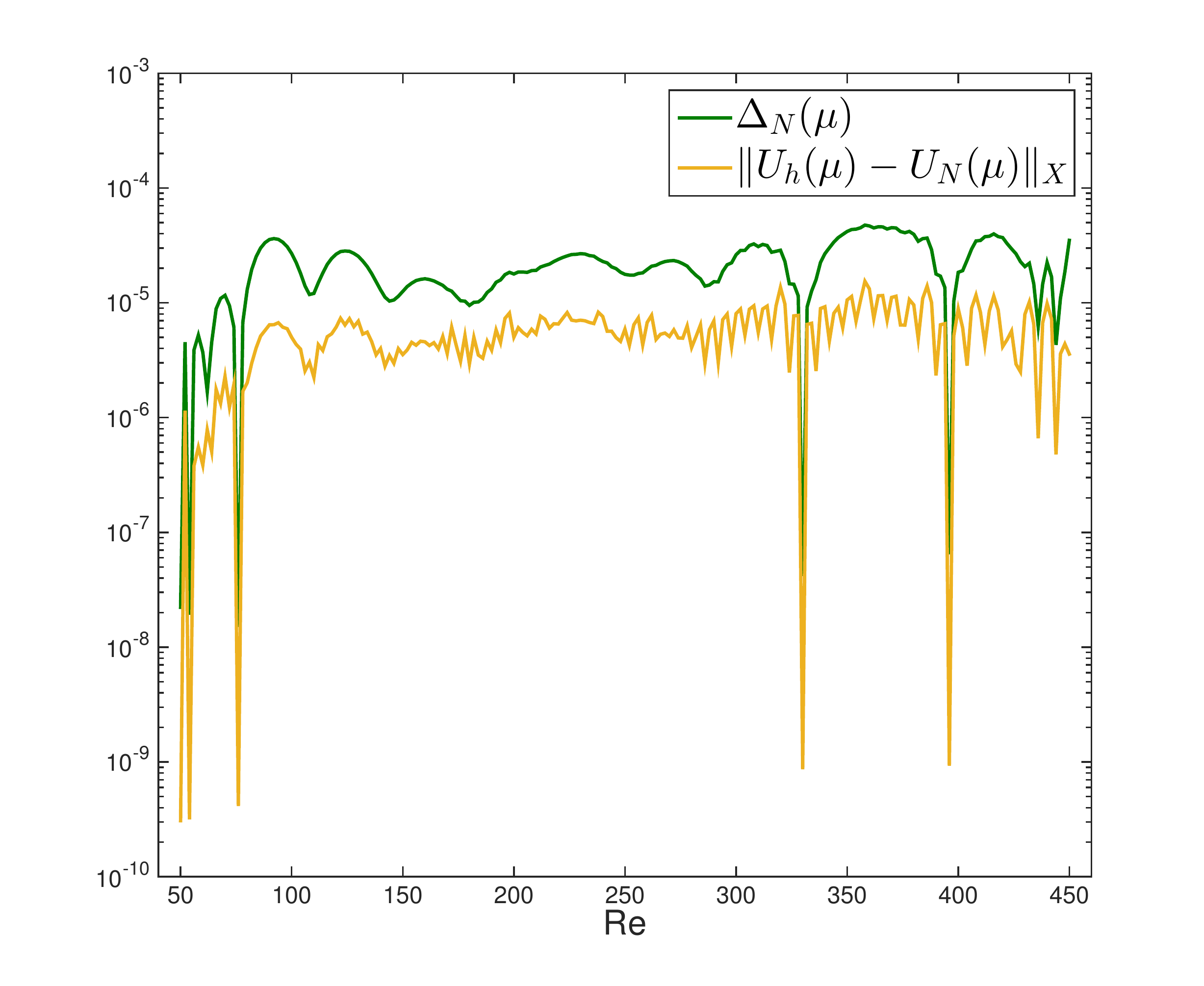}
\vspace{-0.3cm}
\caption{Left: Convergence of the Greedy algorithm. Right: Value of $\Delta_{N_{\max}(\mu)}$ and the error between the FE solution and the RB solution.}\label{fig::Greedyerr}
\end{figure}

To compute the error in the EIM approximation of the Smagorinsky eddy-viscosity term, we define the following errors:

\begin{equation}
e_S(\mu)=\D\sup_{\vk \in Y} |a_S(\wk_h;\wk_h,\vk;\mu)-a_S(\wk_N;\wk_N,\vk;\mu)|,\;
n_S(\mu)=\D\sup_{\vk \in Y} a_S(\wk_h;\wk_h,\vk;\mu)
\end{equation}

In Fig. \ref{fig::SmagoErr} we show the value of $e_S/n_S$, the normalized error of the Smagorinsky term EIM approximation, for all $\mu$ in $\cD$.

\begin{figure}[h]
\centering
\includegraphics[width=0.6\linewidth]{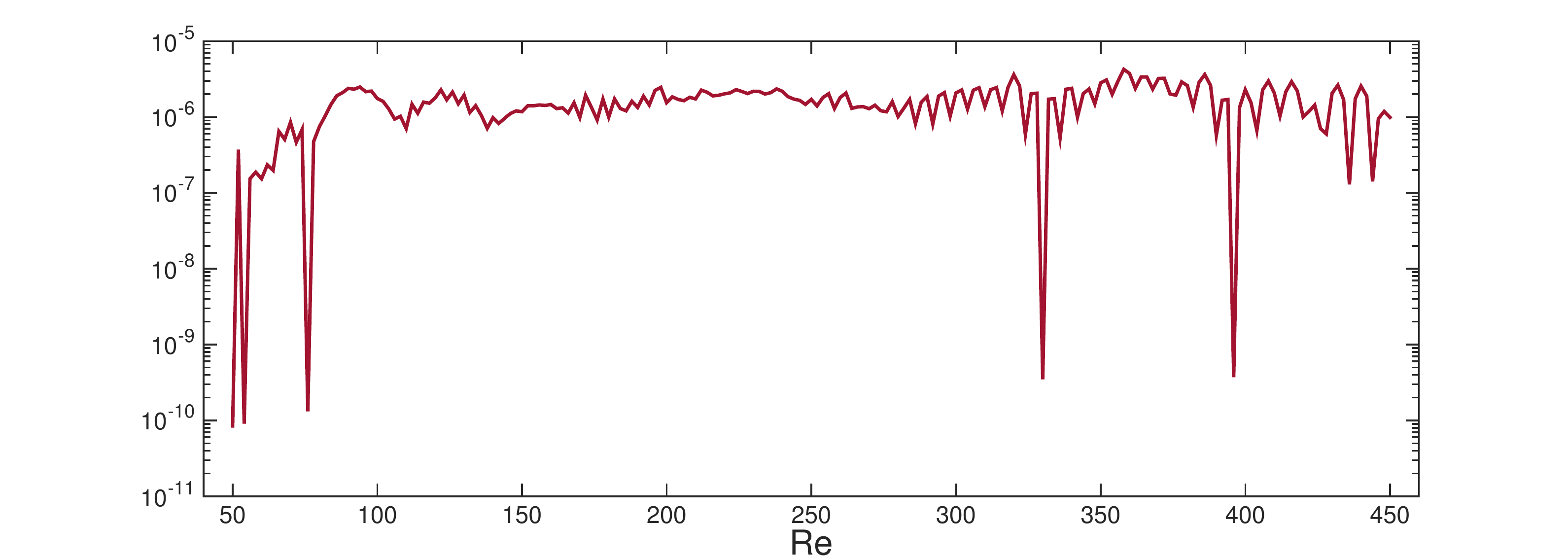}\vspace{-0.3cm}
\caption{Normalized error of the EIM Smagorinsky term approximation.}\label{fig::SmagoErr}
\end{figure}

We can observe that the good approximation of the RB solution provides a good approximation for the Smagorinsky term, since the error between FE and RB solution in Fig. \ref{fig::Greedyerr} (right) and the error in Fig. \ref{fig::SmagoErr} are similar.

In Fig.\ref{fig::SolEscalon} we show a comparison between the FE velocity solution (left) and the RB velocity solution (right) for a chosen parameter value $\mu=320$. Note that both images are practically equal, as the error between both solutions is of order $10^{-6}$.

\begin{figure}[h]
\includegraphics[width=\linewidth]{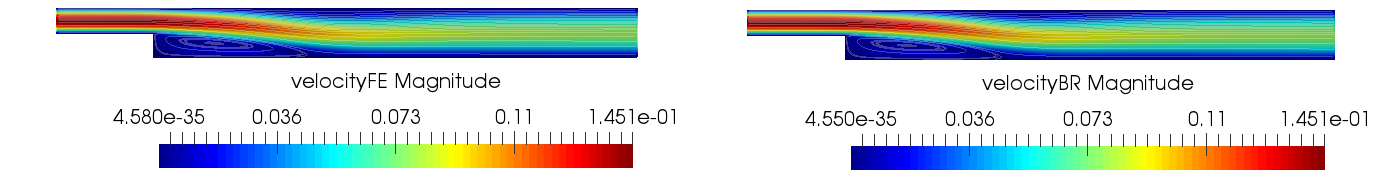}
\caption{FE solution (left) and RB solution (fight) for $\mu=320$.}\label{fig::SolEscalon}
\end{figure}

In Table \ref{tab:: resBFS}, we show the results obtained for several values of $\mu$ in $\cD$, in particular, we compare the computational time for the computation of the FE solution and the RB solution in the online phase. We also show the speedup  rate in the computation of the RB solution, and the relative errors in $H^1$-norm for velocity and in $L^2$ for pressure. We observe a dramatic reduction of the computational time, with speed-up rates over 1000 for large Reynolds number, with relative errors below the Greedy tolerance. The offline phase took 3 days and 10 hours to be completed. In this time, we are considering the time to construct the RBF functions for the stability factor $\beta_h$, the computation of the snapshots necessaries for the EIM, and the Greedy algorithm.

\begin{table}[h]
\[\hspace{-0.1cm}
\begin{tabular}{l|ccccc}
\hline
Data &$\mu=56$&$\mu=132$&$\mu=236$&$\mu=320$&$\mu=450$\\
\hline
$T_{FE}$&237.88s & 503.57s & 1055.91s&1737.74s&2948.11s\\ 
$T_{online}$& 1.40s& 1.51s& 1.69s&1.72s&2.02s\\
\hline
speedup& 169 & 333& 622 &1008&1458\\
\hline
$\|\uk_h-\uk_N\|_T$&$3.77\cdot10^{-7}$&$5.33\cdot10^{-6}$ & $6.58\cdot10^{-6}$ &$1.36\cdot10^{-5}$&$3.57\cdot10^{-6}$\\
$\|p_h-p_N\|_0$&$1.94\cdot10^{-8}$&$6.97\cdot10^{-8}$ & $2.1\cdot10^{-7}$  &$4.82\cdot10^{-7}$&$9.02\cdot10^{-8}$\\
\hline
\end{tabular}\]\vspace{-0.3cm}\caption{Computational time for FE solution and RB online phase, with the speedup and the relative error.}\label{tab:: resBFS}
\end{table}

\subsection{Lid-driven Cavity flow (2D)}

In this test, we apply the reduced-order Smagorinsky turbulence model to the Lid-driven cavity problem.  We consider the non homogeneous Dirichlet boundary condition given by $g(x)=1$ on the lid boundary $\Gamma_{D_g}$, with homogeneous Dirichlet condition on $\Gamma_{D_0}$.
  
%\pgfdeclareimage[width=0.21\linewidth]{DomainCav}{}
%\begin{figure}[H]
%\centering
% \begin{tikzpicture}[font=\footnotesize]
%  %\hspace{3.5cm}
%    \pgftext[at=\pgfpoint{0mm}{0mm},left,base]{\pgfuseimage{DomainCav}}
%    \draw (1.5, 24.5mm) node {$\Gamma_{D_g}$};
%    \draw (1.5, 3mm) node {$\Gamma_{D_0}$};
%    \draw (-0.2, 15mm) node {$\Gamma_{D_0}$};
%    \draw (3.1, 15mm) node {$\Gamma_{D_0}$};
%  \end{tikzpicture}
%\caption{Domain $\Omega$ with the different boundaries identified.\label{GeomCav}}
%\end{figure}
For this test, we also consider the Reynolds number as a parameter, ranging in $\cD=[1000,5100]$. The 2D lid-driven cavity flow has a steady solution up to Reynolds 7500 (c.f. \cite{TomasSmago}); thus in this range, a steady solution is well known to exist. We consider the same finite elements as the previous problem (Taylor-Hood), and we use a regular mesh with 5000 triangles and 2601 nodes. 

In Fig. \ref{fig::betaCav} (left), we show the comparison between $\rho_{\mub}/\beta_{\mub}(\mu)$ and $\rho_T/\beta_h(\mu)$  as in Section \ref{sec::BFS}. Again, the number of basis functions needed to guarantee that $\tau_N(\mu)<1$ in this problem is lower if we chose the norm $\normT{\cdot}$ instead of choosing the natural norm.  
\begin{figure}[h]
\centering
\includegraphics[width=0.45\linewidth]{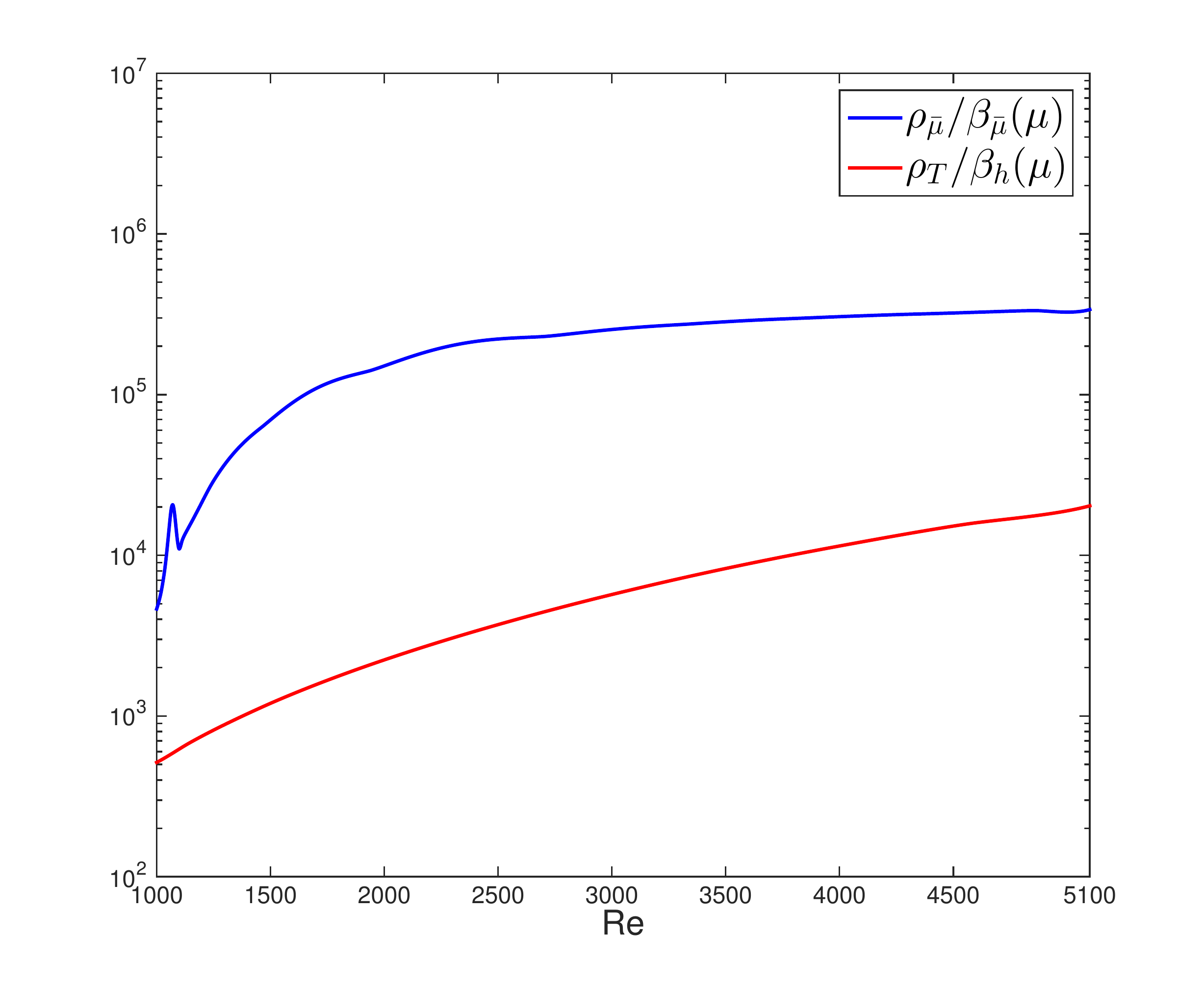}
\includegraphics[width=0.45\linewidth]{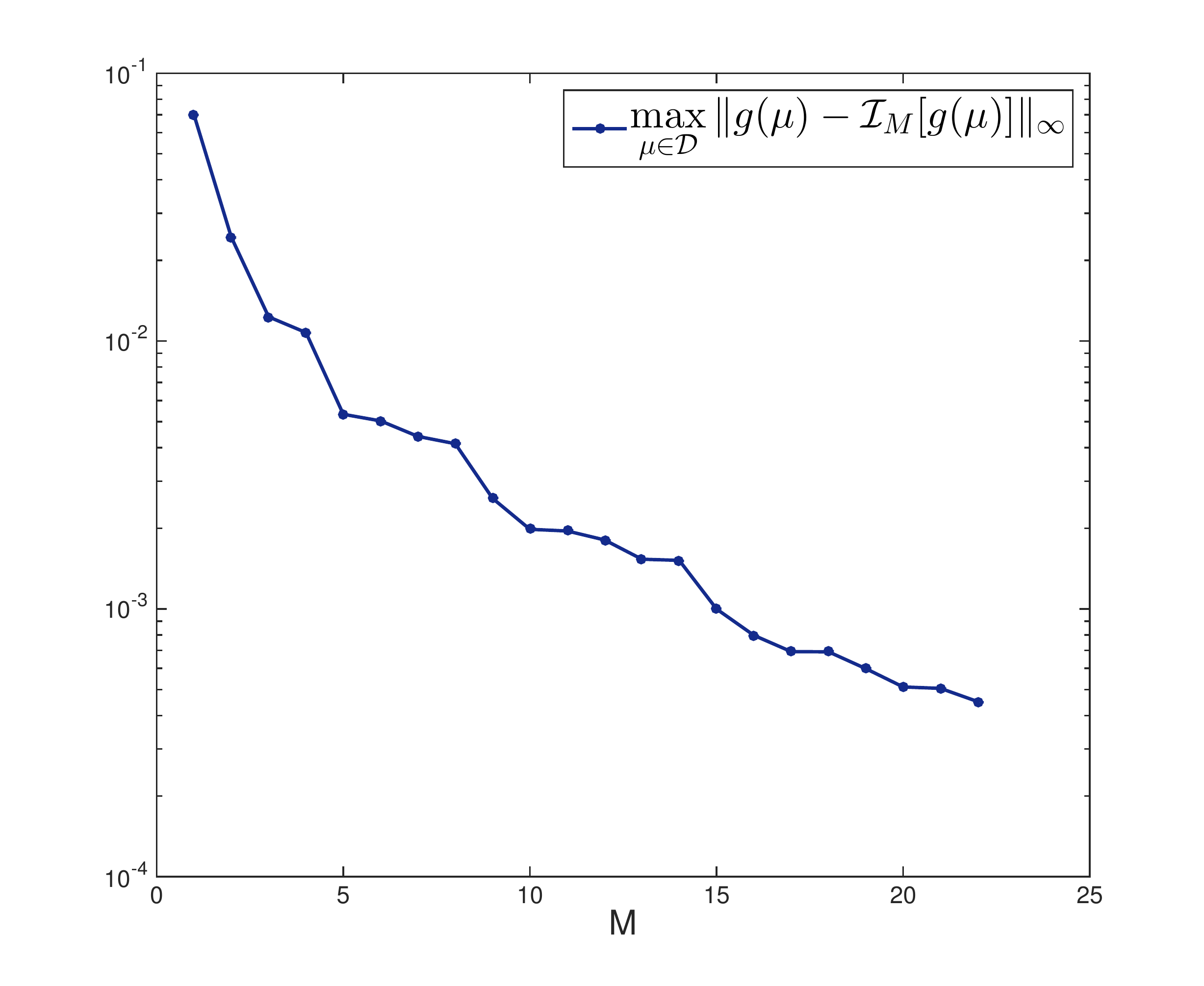}
\vspace{-0.3cm}
\caption{Left: comparison $\rho_{\mub}/\beta_{\mub}(\mu)$ and $\rho_T/\beta_h(\mu)$. Right: Convergence of the EIM algorithm.}\label{fig::betaCav}
\end{figure}
In this numerical test, we need $M_{\max}=22$ basis functions in the EIM algorithm until reaching the tolerance for the relative error of $\varepsilon_{EIM}=5\cdot10^{-4}$. In Fig. \ref{fig::betaCav} (right), we show the convergence of the EIM algorithm.

For the Greedy algorithm, we prescribe a tolerance of $\varepsilon_{RB}=5\cdot10^{-5}$. This tolerance is reached for $N_{\max}=12$ basis functions. Note that, in this case, $N=8$ basis functions are needed in order to assure that $\tau_N(\mu)<1$ for all $\mu$ in $\cD$. In Fig. \ref{fig::GreedyCav} (left) we show the convergence of the Greedy algorithm, and in Fig. \ref{fig::GreedyCav} (right) we show the value of the error and the a posteriori error bound for all $\mu$ in $\cD$.
\begin{figure}[h]
\centering
\includegraphics[width=0.45\linewidth]{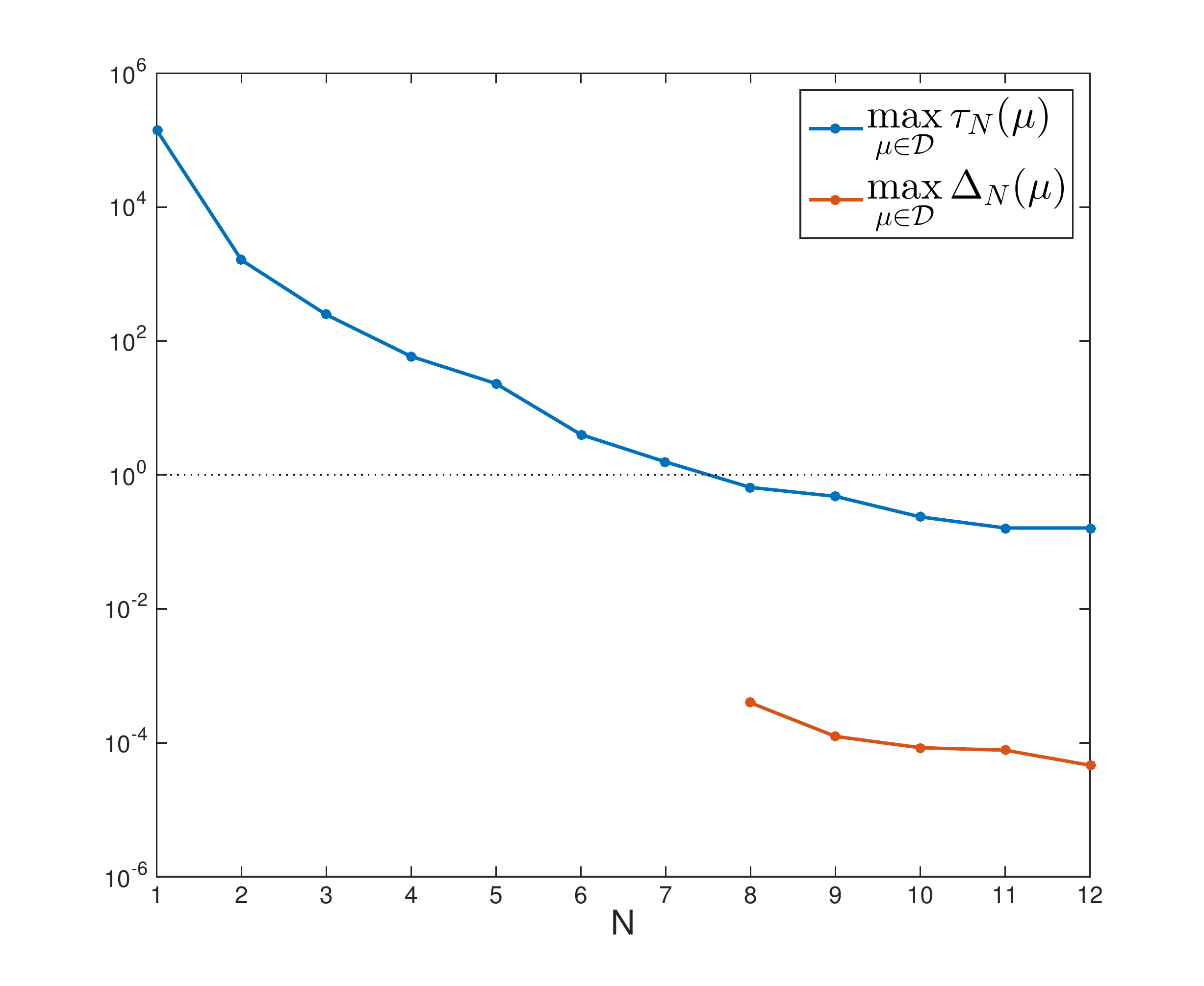}
\includegraphics[width=0.45\linewidth]{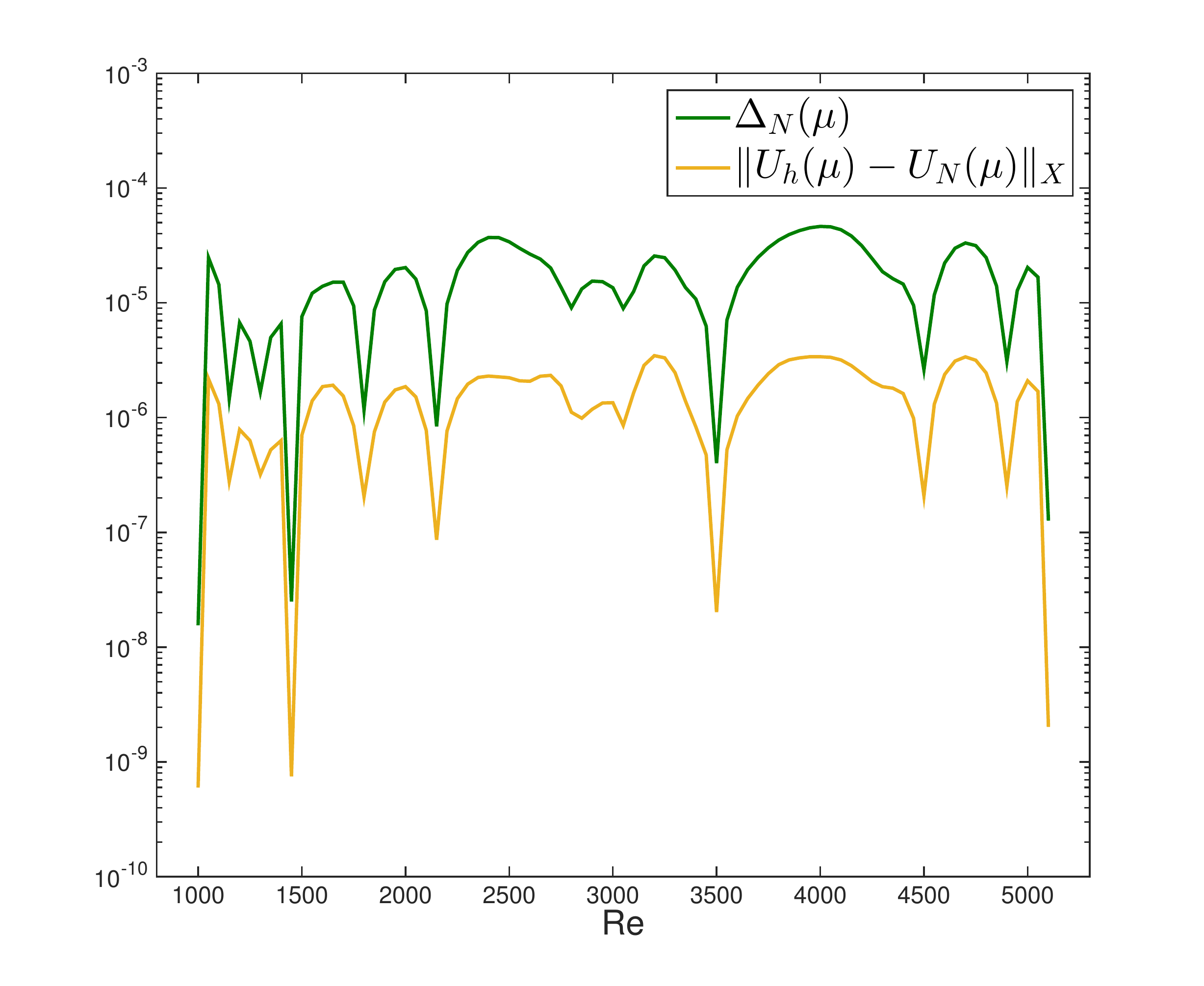}
\vspace{-0.3cm}
\caption{Left: Convergence of the Greedy algorithm. Right: Value of $\Delta_{N_{\max}}(\mu)$ and the error between the FE solution and the RB solution.}\label{fig::GreedyCav}
\end{figure}

As in Section \ref{sec::BFS}, we compute the error $e_S(\mu)/n_S(\mu)$, in order to compute the error in the EIM approximation of the Smagorinsky term. In Fig. \ref{fig::ESmagoC}, we show this error for this numerical test. Again, a good approximation between FE and RB solution, provides a good EIM approximation of the Smagorinsky term.
\begin{figure}[H]
\centering
\includegraphics[width=0.6\linewidth]{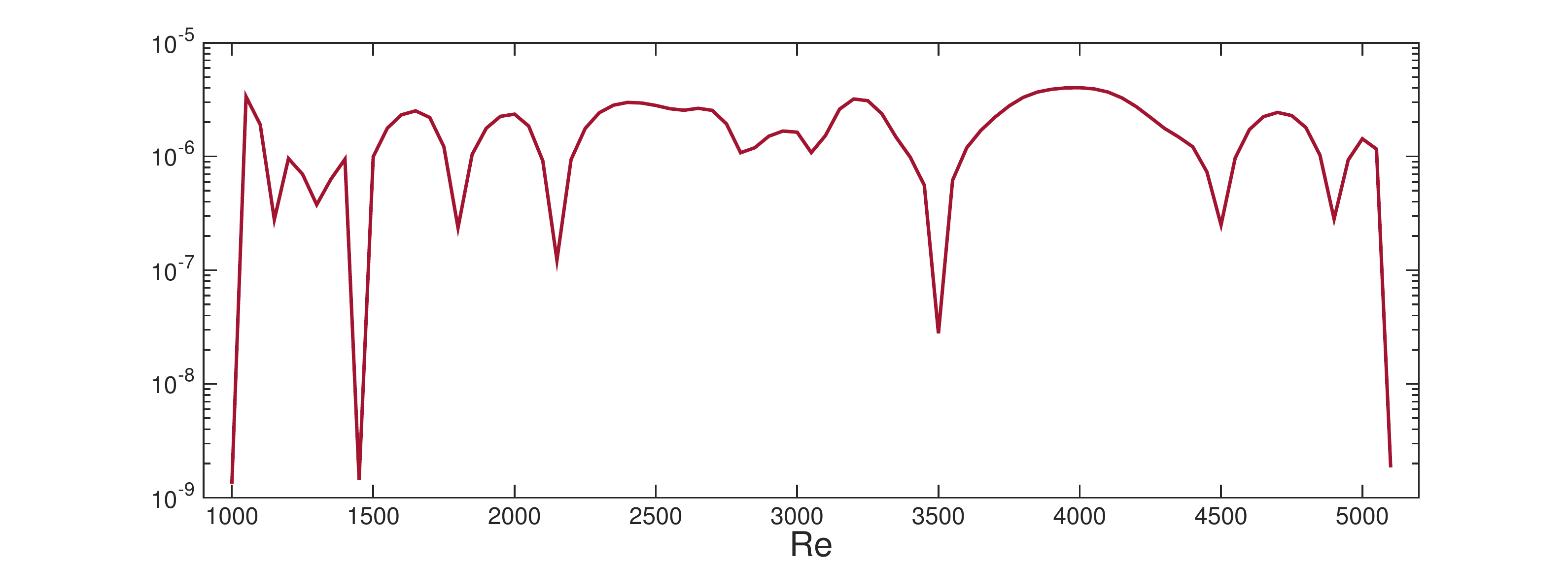}
\caption{Normalized error of the EIM Smagorinsky term approximation.}\label{fig::ESmagoC}
\end{figure}
In Fig. \ref{fig::CavSol} we show a comparison between the FE velocity solution and the RB velocity solution for a chosen parameter value $\mu=4521$. Again, both images are practically equal, as the error between both solutions is of order $10^{-7}$.

\begin{figure}[h]
\centering
\includegraphics[scale=0.25]{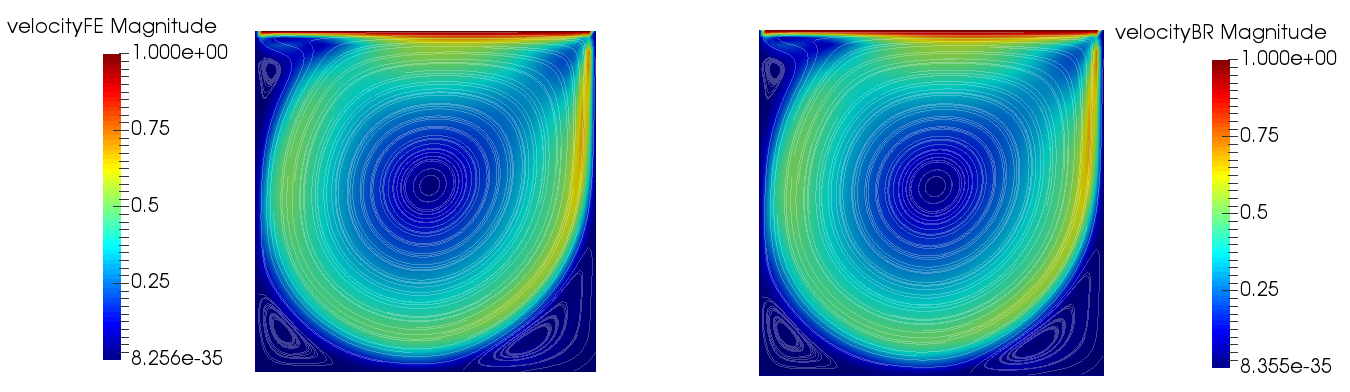}
\vspace{-0.3cm}
\caption{FE solution and RB solution for $\mu=4521$.}\label{fig::CavSol}
\end{figure}

Finally, we show in Table \ref{tab:: resCav} a summary of the results obtained for several values of $\mu$ in $\cD$. For this test, we also observe a dramatic speedup in the computation of the numerical solution, even larger than in the Backward-facing step test. These large speed-up factors are possibly due to the high turbulent levels of viscosity introduced by the Smagorinsky turbulence model. The offline phase of this test took 2 days to be completed.

\begin{table}[h]
\[\hspace{-0.1cm}
\begin{tabular}{l|ccccc}
\hline
Data &$\mu=1610$&$\mu=2751$&$\mu=3886$&$\mu=4521$&$\mu=5100$\\
\hline
$T_{FE}$&638.02s & 1027.62s & 1369.49s&1583.08s&1699.52s\\
$T_{online}$& 0.47s& 0.47s& 0.47s&0.49s&0.52s\\
\hline 
speedup& 1349 & 2182 & 2899 & 3243 & 3227\\
\hline
$\|\uk_h-\uk_N\|_T$&$1.91\cdot10^{-6}$&$1.87\cdot10^{-6}$ & $3.28\cdot10^{-6}$ &$6.26\cdot10^{-7}$&$3.17\cdot10^{-9}$\\
$\|p_h-p_N\|_0$&$1.18\cdot10^{-7}$&$3.65\cdot10^{-7}$ & $3.78\cdot10^{-7}$  &$8.34\cdot10^{-8}$&$1.88\cdot10^{-9}$\\
\hline
\end{tabular}\]
\vspace{-0.3cm}\caption{Computational time for FE solution and RB online phase, with the speedup and the relative error.}\label{tab:: resCav}
\end{table}
\section{Conclusions}
In this paper we have developed a reduced basis Smagorinsky model, using the EIM to linearize the non-linear eddy viscosity of the Smagorinsky model. 

We have developed an \textit{a posteriori } error bound estimator for the Smagorinsky model, extending the theory in the literature for the incompressible Navier-Stokes equations (\textit{e.g} \cite{Manzoni},\cite{Deparis}). With the \textit{a posteriori } error bound estimator we can compute the offline phase in a efficient way, including the computation of the inf-sup stability interpolator, that provides a fast reliable approximation of the inf-sup stability factor value for each parameter value.

We have presented numerical results for two benchmark cases, where we have shown the accuracy of our reduced model and the dramatic reduction of the computational time for both cases, which is typically divided by several thousands. This high speed-up rate is possibly due to the high dissipative effect of the Smagorinsky turbulence model. Extensions to less dissipative turbulence models of VMS kind are in progress.

\section*{Acknowledgments}
We acknowledge Professor Yvon Maday (LJLL) for his support and guidelines during this research. This work has been supported by Spanish Government Project MTM2015-64577-C2-1-R and COST Action TD1307. 

\bibliographystyle{plain}
\bibliography{RefEscalon.bib}
\end{document}